\documentclass[11pt,hyp,]{nyjm}
\usepackage{hyperref}
\hypersetup{nesting=true,debug=true,naturalnames=true}
\usepackage{graphicx,amssymb,upref}

\usepackage[capitalize]{cleveref}

\title{Isomorphisms of Cayley graphs on nilpotent groups}

\author{Dave Witte Morris}  
\address{Department of Mathematics and Computer Science, University of Lethbridge,
Lethbridge, Alberta, T1K~3M4, Canada} 
\email{dave.morris@uleth.ca}  

\author{Joy Morris}  
\address{Department of Mathematics and Computer Science, University of Lethbridge,
Lethbridge, Alberta, T1K~3M4, Canada} 
\email{joy.morris@uleth.ca}  

\author{Gabriel Verret}  
\address{Centre for the Mathematics of Symmetry and Computation,
The University of Western Australia,
35 Stirling Highway, Crawley, Western Australia, 6009, Australia\newline
Also affiliated with FAMNIT, University of Primorska, Glagolja\v{s}ka 8, SI-6000 Koper, Slovenia.}
\email{gabriel.verret@uwa.edu.au}  

\keywords{Cayley graph, nilpotent group, isomorphism, Cayley isomorphism property}

\subjclass[2010]{05C25, 20F18, 05C63, 20F65}

\makeatletter
\newcommand{\noprelistbreak}{\@nobreaktrue\nopagebreak} 
\makeatother

\numberwithin{equation}{section}

\newtheorem{lem}[equation]{Lemma}
\newtheorem{prop}[equation]{Proposition}
\newtheorem{thm}[equation]{Theorem}
\newtheorem{cor}[equation]{Corollary}
\newtheorem{Step}{Step}

\Crefname{thm}{Theorem}{Theorems}
\Crefname{prop}{Proposition}{Propositions}
\Crefname{lem}{Lemma}{Lemmas}

\crefmultiformat{thm}{Theorems~#2#1#3}{ and~#2#1#3}{, #2#1#3}{, and~#2#1#3}

\theoremstyle{definition}
\newtheorem*{notation*}{Notation}
\newtheorem{eg}[equation]{Example}
\newtheorem{defn}[equation]{Definition}
\newtheorem{rem}[equation]{Remark}
\newtheorem{rems}[equation]{Remarks}
\newtheorem*{ack}{Acknowledgments}

\Crefname{rems}{Remark}{Remarks}

\newcommand{\pref}[1]{(\ref{#1})}

\newcommand{\fullccf}[2]{\textup(cf.\ \fullcref{#1}{#2}\textup)}
\newcommand{\fullcref}[2]{\cref{#1}\pref{#1-#2}}
\newcommand{\fullCref}[2]{\Cref{#1}\pref{#1-#2}}
\newcommand{\fullcsee}[2]{(see \fullcref{#1}{#2})}
 
\DeclareMathOperator{\Aut}{Aut}
\DeclareMathOperator{\Cay}{Cay}
\DeclareMathOperator{\dist}{dist}
\DeclareMathOperator{\rank}{rank}

\newcommand{\edge}{\mathbin{\hbox{\vrule height 2 pt depth -1.25 pt width 10pt}}}
\newcommand{\iso}{\cong}
\newcommand{\normal}{\triangleleft}
\newcommand{\normaleq}{\trianglelefteq}
\newcommand{\quot}{\overline}

\newcommand{\NN}{\mathbb{N}}
\newcommand{\ZZ}{\mathbb{Z}}

\newcommand\bigset[2]{\left\{\, #1 
 \mathrel{\left| \vphantom {\left\{ #1 \mid #2 \right\} }
 \right.} #2 \,\right\} }

\usepackage{color}
\setlength{\marginparwidth}{40pt}
\setlength{\marginparsep}{10pt}
\newcommand{\refnote}[1]{\marginpar{%
	\color{blue}
	\vbox to 0pt{\vss
	$\begin{pmatrix} \text{see} \\[-3pt] \text{note} \\[-3pt] \text{\ref{#1}} \end{pmatrix}$%
	\vskip -1.1\baselineskip}}}
\theoremstyle{definition}
\newtheorem{aid}{}
\numberwithin{aid}{section}
\newcommand{\oldendaid}{}
\let\oldendaid=\endaid
\renewcommand{\endaid}{\oldendaid\bigskip\hrule width\textwidth \filbreak\bigskip}
\crefformat{aid}{Note~#2#1#3}

\tolerance = 3000 

\begin{document} 
 
\begin{abstract}  
Let $S$ be a finite generating set of a torsion-free, nilpotent group~$G$. We show that every automorphism of the Cayley graph $\Cay(G;S)$ is affine. (That is, every automorphism of the graph is obtained by composing a group automorphism with multiplication by an element of the group.) More generally, we show that if $\Cay(G_1;S_1)$ and $\Cay(G_2;S_2)$ are connected Cayley graphs of finite valency on two nilpotent groups $G_1$ and~$G_2$, then every isomorphism from $\Cay(G_1;S_1)$ to $\Cay(G_2;S_2)$ factors through to a well-defined affine map from $G_1/N_1$ to $G_2/N_2$, where $N_i$~is the torsion subgroup of~$G_i$. For the special case where the groups are abelian, these results were previously proved by A.\,A.\,Ryabchenko and C.\,L\"oh, respectively.
\end{abstract} 
\maketitle
\tableofcontents

\section{Introduction}

It is easy to construct examples of non-isomorphic groups that have isomorphic Cayley graphs,  \refnote{NonIsoGrps}
even if the Cayley graphs are required to be connected and have finite valency. We show that this is not possible when the groups are torsion-free and nilpotent:

\begin{thm} \label{G1isoG2}
Suppose $G_1$ and $G_2$ are torsion-free, nilpotent groups.  If $G_1$ has a connected Cayley graph of finite valency that is isomorphic to a Cayley graph on~$G_2$, then $G_1 \iso G_2$.
\end{thm}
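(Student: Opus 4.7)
The plan is to derive \cref{G1isoG2} as a direct corollary of the more general isomorphism theorem announced in the abstract, which I expect to be the central result of the paper: any isomorphism $\phi \colon \Cay(G_1;S_1) \to \Cay(G_2;S_2)$ between connected Cayley graphs of finite valency on nilpotent groups factors through to a well-defined affine map $\bar{\phi} \colon G_1/N_1 \to G_2/N_2$, where $N_i$ is the torsion subgroup of $G_i$ and ``affine'' means the composition of a group homomorphism with left translation by an element of the target.

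Assuming that general theorem, the proof of \cref{G1isoG2} is short. Since $G_1$ and $G_2$ are torsion-free, the torsion subgroups $N_1$ and $N_2$ are trivial, so $G_i/N_i = G_i$ for $i=1,2$ and $\bar{\phi} = \phi$. Writing $\phi(g) = h \cdot \alpha(g)$ with $\alpha \colon G_1 \to G_2$ a group homomorphism and $h \in G_2$, the bijectivity of $\phi$ forces $\alpha$ to be bijective as well, hence a group isomorphism. Therefore $G_1 \iso G_2$.

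The substantive work, and the main obstacle, lies in establishing the general affine-isomorphism theorem itself. A natural strategy is to first reduce to torsion-free nilpotent groups — the torsion elements of a finitely generated nilpotent group form a characteristic finite normal subgroup, so one may pass to the quotient — and then to induct on the nilpotency class. At each inductive stage, one would quotient out by the center (or by the last nontrivial term of the lower central series), apply the inductive hypothesis to the smaller-class quotients $G_1/Z(G_1)$ and $G_2/Z(G_2)$, and then lift the resulting affine map back using the abelian base case (due to Ryabchenko and L\"oh) together with an analysis of how $\phi$ permutes cosets of the center. The key difficulty is likely to be showing that the behavior of $\phi$ on these cosets assembles into genuine group-theoretic data rather than merely a combinatorial bijection, and handling the interaction between torsion in the center and the torsion-free quotient.
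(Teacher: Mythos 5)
Your derivation is exactly how the paper proceeds: \cref{G1isoG2} is presented as an immediate consequence of the stronger result (\cref{NilpotentIso}, equivalently the torsion-free case of \cref{NilpotentTorsion}), where triviality of the torsion subgroups makes the induced affine bijection equal to $\varphi$ itself and its linear part the required isomorphism $G_1 \iso G_2$. Your sketch of the underlying general theorem is also broadly aligned with the paper's argument (induction plus Ryabchenko's abelian base case), the main difference being that the paper inducts on the Hirsch rank $\rank G_1 + \rank G_2$ and quotients by $Z(G_i) \cap \sqrt{[G_i,G_i]}$ rather than by the full centre, since it is the distortion of that subgroup that lets one control how $\varphi$ behaves on its cosets.
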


In fact, the next theorem establishes the stronger conclusion\refnote{G1isoG2Aid} that every isomorphism of the Cayley graphs is obtained from an isomorphism of the groups.  

\begin{defn}
Suppose $\varphi \colon G_1 \to G_2$, where $G_1$ and~$G_2$ are groups. We say that $\varphi$ is an \emph{affine bijection} if it is the composition of a group isomorphism and a translation. That is, there exist a group isomorphism $\alpha \colon G_1 \to G_2$ and $h \in G_2$, such that $\varphi(x) = h \cdot \alpha(x)$, for all $x \in G_1$.
\end{defn}

\begin{thm} \label{NilpotentIso}
Assume
	\begin{itemize}
	\item $G_1$ and~$G_2$ are torsion-free, nilpotent groups,
	and
	\item $S_i$ is a finite, symmetric generating set of~$G_i$, for $i = 1,2$.
	\end{itemize}
Then every isomorphism from $\Cay(G_1;S_1)$ to $\Cay(G_2;S_2)$ is an affine bijection.
\end{thm}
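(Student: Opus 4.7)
The plan is induction on the nilpotency class~$c$ of the groups. The base case $c = 1$ (abelian groups) is L\"oh's theorem, cited in the abstract. By composing $\varphi$ with left-translation by $\varphi(e_{G_1})^{-1}$ in~$G_2$, I may assume that $\varphi(e) = e$; it then suffices to show that $\varphi$ is a group isomorphism.

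The key tool for the inductive step is Bass's growth formula: for a finitely generated torsion-free nilpotent group $G$ of class~$c$ with finite generating set~$S$, a non-identity element $g\in G$ satisfies $|g^n|_S \asymp n^{1/i}$, where $i$ is the unique index with $g\in\gamma_i G\setminus\gamma_{i+1}G$. In particular, $\gamma_c G$ is characterised purely metrically as $\{e\}$ together with the elements whose iterates grow most slowly. Since a graph isomorphism is an isometry in the word metric, $\varphi$ must carry $\gamma_c G_1$ bijectively onto $\gamma_c G_2$. Re-running the same argument for the graph isomorphism $\tau_{\varphi(g)^{-1}}\circ\varphi\circ\tau_g$ (which still fixes the identity) for each $g\in G_1$ yields $\varphi(g\,\gamma_c G_1) = \varphi(g)\,\gamma_c G_2$, so $\varphi$ respects the partition into $\gamma_c$-cosets on both sides.

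Consequently $\varphi$ descends to a bijection $\bar\varphi\colon G_1/\gamma_c G_1\to G_2/\gamma_c G_2$, and adjacency preservation in the original graph passes to adjacency preservation for the quotient Cayley graphs on the images of~$S_1$ and~$S_2$. These quotient groups are torsion-free nilpotent of class at most $c-1$, so the inductive hypothesis says $\bar\varphi$ is affine and, given our normalisation, a group isomorphism. In parallel, the restriction $\varphi|_{\gamma_c G_1}\colon \gamma_c G_1\to \gamma_c G_2$ is an isomorphism of Cayley graphs on torsion-free abelian groups (with appropriate induced generating sets), so L\"oh's theorem again makes it a group isomorphism.

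The main obstacle is recovering the homomorphism property of $\varphi$ itself from those of $\bar\varphi$ and $\varphi|_{\gamma_c G_1}$: a priori $\varphi$ could twist the central extension $1\to\gamma_c G_1\to G_1\to G_1/\gamma_c G_1\to 1$ by a nontrivial obstruction, measured by some map $G_1\to\gamma_c G_2$ that need not be multiplicative. To kill this obstruction I would go beyond the filtration and use the preserved full metric, in particular tracking the images under $\varphi$ of short commutator words $[s,t]$ and iterated commutators among the generators $s,t\in S_1$. Combined with the regularity of the $G_1$-action on $\Cay(G_1;S_1)$, the constraint that all such commutator relations be respected should pin down the extension data and force $\varphi$ to be an honest homomorphism. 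This commutator/obstruction bookkeeping is where I expect the technical heart of the argument to lie.
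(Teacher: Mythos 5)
Your overall architecture --- normalise $\varphi(e)=e$, find a characteristic central subgroup that $\varphi$ must respect, quotient, and induct with the torsion-free abelian case as the base --- is close in spirit to the paper's, which inducts on $\rank G_1+\rank G_2$ and quotients by $Z_i^\dagger=Z(G_i)\cap\sqrt{[G_i,G_i]}$ rather than by $\gamma_c G_i$. But there is a genuine gap at the step ``since a graph isomorphism is an isometry in the word metric, $\varphi$ must carry $\gamma_c G_1$ bijectively onto $\gamma_c G_2$.'' The Bass--Guivarc'h characterisation of the layer containing $g$ is stated in terms of the growth of $\dist_S(e,g^n)$, i.e.\ it involves the \emph{powers} of~$g$; an isometry fixing $e$ controls $\dist(e,\varphi(g^n))$, not $\dist(e,\varphi(g)^n)$, and a priori $\varphi(g^n)$ has nothing to do with $\varphi(g)^n$. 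Establishing precisely this relation --- that $\varphi$ carries the line $[z^k]_{k\in\ZZ}$ to a line $[\sigma(z)^k]_{k\in\ZZ}$ for suitable central~$z$ --- is the technical heart of the paper (Step~\ref{ZGeodToGeod}); it needs the convex-geodesic machinery for bi-orderable groups (\cref{NilpIsBO,ConvexGeodesic,EdgeInCentre}), the orbit construction of \cref{phi(G*)}, and an inner appeal to the induction hypothesis for a subgroup $G_1^*$ of infinite index. That inner appeal is also why the paper inducts on Hirsch rank rather than nilpotency class: $G_1^*$ has strictly smaller rank but need not have smaller class, so your induction on~$c$ would not apply to it. Two smaller points: the metric only detects isolators, and $\gamma_cG$ need not be isolated even in a torsion-free nilpotent group (the paper's remark after \cref{NilpLems} gives a finite-index subgroup of the Heisenberg group with $[G,G]\subsetneq\sqrt{[G,G]}$), so you should work with $\sqrt{\gamma_cG}$ or, as the paper does, with $Z(G)\cap\sqrt{[G,G]}$; and the torsion-free abelian base case is Ryabchenko's theorem (\cref{AbelianCase}), not L\"oh's.

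The second gap is the one you flag yourself: the extension/obstruction step is not a proof but a hope. After the quotient map is known to be a homomorphism, one is left with $\varphi(gs)=\varphi(g)\,\varphi(s)\,\sigma(z)$ for some central $z$ depending on $g$ and~$s$, and ``tracking commutator words among the generators'' does not by itself pin this down. The paper closes this with a short but essential two-part argument: first, $\sigma_g(z)$ is shown to be independent of~$g$ via the bounded-distance criterion \fullcref{NilpLems}{BddDist}; second, iterating the displayed identity shows that $z\neq e$ would force $sz^k\in S_1$ for every $k\ge 0$, contradicting the finiteness of~$S_1$. Without a concrete substitute for both ingredients, the proposal does not close.
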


\begin{rem}
In the special case where $G_1$ and~$G_2$ are abelian, \cref{NilpotentIso} was proved by A.\,A.\,Ryabchenko \cite{Ryabchenko-CIAbelian}.
\end{rem}

\begin{defn} \label{CIDefn}{\cite[\S6.4]{Li-CISurvey}}
Let $G$ be a group.
	A Cayley graph $\Cay(G;S)$ is said to be \emph{normal} if the left-regular representation of~$G$ \refnote{regrep}
	is a normal subgroup of $\Aut \bigl( \Cay(G;S) \bigr)$ or, equivalently \cite[Lem.~2.2(b)]{Godsil-FullAut}, if every automorphism of $\Cay(G;S)$ is an affine bijection.

	
\end{defn}

\begin{rem}
It is easy to see that the left-regular representation of~$G$ is a subgroup of the automorphism group of every Cayley graph on~$G$. \Cref{CIDefn} 
requires this subgroup to be normal.
\end{rem} 

With this terminology, the special case of \cref{NilpotentIso} in which $G_1 = G_2$ has the following known result as an
immediate consequence.

\begin{cor}[M\"oller-Seifter {\cite[Thm.~4.1(1)]{MoellerSeifter}}] \label{NilpotentIsNormal}
If $G$ is a torsion-free, nilpotent group, then every connected Cayley graph of finite valency on~$G$ is normal. 
\end{cor}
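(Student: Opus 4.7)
The plan is essentially to observe that this corollary is the diagonal case of \cref{NilpotentIso}. Given a connected Cayley graph $\Cay(G;S)$ of finite valency on a torsion-free nilpotent group~$G$, the generating set $S$ is finite (since the valency is finite) and symmetric (since the graph is undirected), and it generates $G$ (since the graph is connected). So the hypotheses of \cref{NilpotentIso} are satisfied when we take $G_1 = G_2 = G$ and $S_1 = S_2 = S$.

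With this setup, any automorphism of $\Cay(G;S)$ is an isomorphism from $\Cay(G_1;S_1)$ to $\Cay(G_2;S_2)$, and \cref{NilpotentIso} tells us directly that every such isomorphism is an affine bijection. Thus every automorphism of $\Cay(G;S)$ is affine.

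Finally, I invoke the equivalence stated in \cref{CIDefn} (citing \cite[Lem.~2.2(b)]{Godsil-FullAut}), which says that a Cayley graph is normal precisely when every automorphism is an affine bijection. Applying this to $\Cay(G;S)$ yields the conclusion. There is no genuine obstacle here: the whole argument is just specializing \cref{NilpotentIso} to the case of a single group and single generating set, then translating ``all automorphisms are affine'' into ``the Cayley graph is normal'' via the definition.
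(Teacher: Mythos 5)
Your proposal is correct and matches the paper's own reasoning exactly: the paper presents \cref{NilpotentIsNormal} as the immediate consequence of the special case of \cref{NilpotentIso} with $G_1 = G_2$ and $S_1 = S_2$, combined with the equivalence in \cref{CIDefn} between normality and every automorphism being an affine bijection. Nothing further is needed.
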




In the statement of \cref{NilpotentIso}, the word ``nilpotent'' cannot be replaced with ``solvable'' (or even ``polycyclic''):

\begin{eg} \label{KleinBottleEg}
Let $G$ be the unique nonabelian semidirect product of the form $\ZZ \rtimes \ZZ$. More precisely, 
	$$G = \langle\, a, b \mid b^{-1} a b = a^{-1}\,\rangle = \langle a \rangle \rtimes \langle b \rangle .$$
(In other words, $G$~is the fundamental group of the Klein bottle.) Then $G$~is obviously polycyclic (so it is solvable), but it is not difficult to see that $\Cay \bigl( G; \{a^{\pm1},b^{\pm1}\} \bigr)$ is not normal. 
(Namely, the map $\varphi(a^i b^j) = b^i a^j$ is a graph automorphism that is not an affine bijection.)
\refnote{KleinBottlePf}
\end{eg}

If $G$ is not torsion-free, then the conclusion of \cref{NilpotentIsNormal} fails:

\begin{prop} \label{TorsionNotNormal}
Let $G$ be a finitely generated, infinite group.
	If $G$ is not torsion-free, then $G$~has a connected Cayley graph of finite valency that is not normal.
\end{prop}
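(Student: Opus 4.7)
The plan is to produce a finite symmetric generating set $S$ of~$G$ such that $e, t \notin S$ and $tS = S$ for some non-trivial torsion element $t \in G$. With such an~$S$, the neighborhoods satisfy $N(e) = S = tS = N(t)$ in $\Cay(G;S)$ and $e \not\sim t$, so the transposition $\phi \colon G \to G$ that swaps $e$ and~$t$ and fixes every other element is a graph automorphism; and since $\phi$ is a non-trivial permutation with tiny support, it cannot be affine. Hence $\Cay(G;S)$ will witness non-normality.

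For the construction, let $t \in G$ be a non-trivial torsion element, so $\langle t \rangle$ is finite. Since $G$ is infinite, $\langle t \rangle$ is a proper subgroup of~$G$. Fix any finite symmetric generating set $T$ of~$G$ with $e \notin T$; since $T$ generates the infinite group~$G$, we have $T \not\subseteq \langle t \rangle$. I would then define
\[ S = \langle t \rangle \, T \, \langle t \rangle \setminus \langle t \rangle. \]
Then $S$ is finite and symmetric (using $T = T^{-1}$ and $\langle t \rangle^{-1} = \langle t \rangle$), disjoint from $\langle t \rangle$ (so $e, t \notin S$), and invariant under left multiplication by~$t$ (both $\langle t \rangle T \langle t \rangle$ and $\langle t \rangle$ are so invariant). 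To see $\langle S \rangle = G$, pick any $s \in T \setminus \langle t \rangle$: then $s \in S$, and $ts \in \langle t \rangle T \langle t \rangle$ with $ts \notin \langle t \rangle$, so $ts \in S$, giving $t = (ts)\,s^{-1} \in \langle S \rangle$ and hence $\langle S \rangle \supseteq T \cup \langle t \rangle = G$.

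The remaining---and only delicate---obstacle is to rule out that $\phi$ is an affine bijection. Suppose $\phi(x) = h\,\alpha(x)$ for some group automorphism $\alpha$ of~$G$ and some $h \in G$. Evaluating at $x = e$ gives $h = t$, so $\alpha(x) = t^{-1}x$ for every $x \notin \{e, t\}$. Choosing $x, y \in G$ with $x, y, xy$ all outside $\{e, t\}$---possible because $G$ is infinite---and applying $\alpha(xy) = \alpha(x)\alpha(y)$ yields $t^{-1}xy = t^{-1}x \cdot t^{-1}y$, i.e., $t = e$, contradicting $t \neq e$. Therefore $\Cay(G;S)$ is a connected Cayley graph of finite valency on~$G$ that is not normal.
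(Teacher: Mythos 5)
Your proof is correct and is essentially the paper's argument: the paper takes the connection set $FSF$ for a nontrivial finite subgroup $F$ and a generating set $S$, observes that the vertices in each coset $gF$ are mutual twins, and swaps two of them in a coset chosen disjoint from $FSF \cup \{e\}$; you take $F = \langle t \rangle$ and delete $\langle t \rangle$ from $\langle t \rangle T \langle t \rangle$ so that $e$ and $t$ themselves become non-adjacent twins, then swap them. The only real difference is how non-affineness is verified (your direct computation forcing $t = e$, versus the paper's observation that its swap fixes $e$ and every generator yet is not the identity), and both verifications are routine.
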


%

However, the next theorem shows that if the torsion-free hypothesis is removed from \cref{NilpotentIso}, then the conclusion still holds modulo the elements of finite order.

\begin{defn}[{}{\cite[1.2.13, p.~11]{LennoxRobinson}}]
Suppose $G$ is a finitely generated, nilpotent group. The set of all elements of finite order in~$G$ is called the \emph{torsion subgroup} of~$G$. (This is a finite, normal subgroup of~$G$.)
\end{defn}

\begin{thm} \label{NilpotentTorsion}
Assume
	\begin{itemize}
	\item $S_i$ is a symmetric, finite generating set of the nilpotent group~$G_i$, for $i = 1,2$,
	\item $\varphi$ is an isomorphism from $\Cay(G_1;S_1)$ to $\Cay(G_2;S_2)$,
	and
	\item $N_i$ is the torsion subgroup of~$G_i$, for $i = 1,2$.
	\end{itemize}
Then $\varphi$ induces a well-defined affine bijection $\quot\varphi \colon G_1/N_1 \to G_2/N_2$.
\end{thm}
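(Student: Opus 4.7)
The plan is to reduce \cref{NilpotentTorsion} to \cref{NilpotentIso} by showing that $\varphi$ descends to a graph isomorphism of Cayley graphs on the torsion-free nilpotent quotients $\overline{G}_i := G_i/N_i$. After composing $\varphi$ with a left translation, I may assume $\varphi(e) = e$. Let $\overline{S}_i$ be the image of $S_i$ in $\overline{G}_i$ with the identity removed, and let $\pi_i \colon G_i \to \overline{G}_i$ be the projection. Then $\pi_i$ realizes $\Cay(G_i;S_i)$ as a graph cover of $\Cay(\overline{G}_i;\overline{S}_i)$ whose fibers are the $N_i$-cosets. Once I show that $\varphi$ sends each $N_1$-coset onto a single $N_2$-coset, the induced bijection $\overline{\varphi} \colon \overline{G}_1 \to \overline{G}_2$ is a graph isomorphism between the quotient Cayley graphs (adjacency in each is detected at the level of fibers), and \cref{NilpotentIso}, applied to $\overline{\varphi}$, finishes the job.

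The crux is therefore the coset-preservation step. My strategy is to show that the partition of $G_i$ into $N_i$-cosets is a canonical block system for $\Aut(\Cay(G_i;S_i))$ that is detected by the graph structure alone and hence preserved by any isomorphism. Two ingredients are needed: (a)~the left-regular copy of $N_i$ inside $\Aut(\Cay(G_i;S_i))$ is a normal subgroup, so that the $N_i$-cosets are blocks of imprimitivity; and (b)~this normal subgroup admits an intrinsic graph-theoretic description --- for instance as the unique maximal subgroup of $\Aut(\Cay(G_i;S_i))$ acting freely on the vertex set with orbits of uniformly bounded diameter --- that forces $\varphi$ to carry the left-regular copy of $N_1$ onto the left-regular copy of $N_2$. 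For (a), although the left-regular copy of the full group $G_i$ need not be normal in $\Aut(\Cay(G_i;S_i))$ when $G_i$ has torsion (see \cref{TorsionNotNormal}), the finite subgroup corresponding to $N_i$ is a much smaller target, and I would adapt the techniques behind \cref{NilpotentIsNormal}, combined with a local-combinatorial analysis near each vertex.

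The main technical difficulty I anticipate is (b): a priori other finite subgroups of $\Aut(\Cay(G_i;S_i))$ (such as point stabilizers, or exotic finite symmetry groups that can arise precisely because the Cayley graph need not be normal) could compete with the left-regular copy of $N_i$. Should a clean intrinsic characterization prove elusive, a natural fallback is induction on $|N_1| + |N_2|$. Pick a non-trivial central subgroup $Z_1 \leq N_1$ (which exists since $N_1$ is a non-trivial finite nilpotent group), and argue --- using the centrality of $Z_1$ in $G_1$ to simplify (a) --- that $\varphi$ respects $Z_1$-cosets and that their image is a coset of a corresponding central $Z_2 \leq N_2$. Then $\varphi$ descends to an isomorphism of Cayley graphs on $G_1/Z_1$ and $G_2/Z_2$, whose torsion subgroups have strictly smaller combined order, and the inductive hypothesis applies. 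The base case $|N_1| = |N_2| = 1$ is precisely \cref{NilpotentIso}.
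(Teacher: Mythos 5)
Your overall frame --- reduce to \cref{NilpotentIso} by showing that $\varphi$ maps $N_1$-cosets onto $N_2$-cosets and hence descends to an isomorphism of the quotient Cayley graphs --- is exactly the paper's, and the bookkeeping around it (adjacency of fibers, applying \cref{NilpotentIso} to $\quot\varphi$) is fine. But the coset-preservation step is the entire content of the theorem, and you have not proved it: both of your routes stall at the same wall. Route (a)+(b) defers everything to an ``intrinsic graph-theoretic characterization'' of the left-regular copy of $N_i$ inside $\Aut\bigl(\Cay(G_i;S_i)\bigr)$, which you acknowledge you cannot supply; and ``adapting the techniques behind \cref{NilpotentIsNormal}'' will not produce even part (a), since those techniques live entirely in the torsion-free world and say nothing about why a finite normal subgroup's cosets form a block system. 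Your fallback induction on $|N_1|+|N_2|$ has the same hole: the inductive step requires showing that $\varphi$ respects $Z_1$-cosets and sends them to cosets of a central subgroup of $G_2$, which is the same unproven claim in miniature (and there is no a priori reason a graph isomorphism should send central torsion directions of $G_1$ to \emph{central} ones of $G_2$). What is missing is a concrete graph-theoretic mechanism that distinguishes torsion elements, and without one the argument does not close.

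The paper supplies two ideas you do not have. First (\cref{phi(N)}): for a single Cayley graph $\Cay(G;S)$ with $N$ finite normal and $G/N$ bi-orderable, every $\rho \in \Aut_e\bigl(\Cay(G;S)\bigr)$ satisfies $\rho(N) \subseteq N$. The proof combines the convex geodesic line coming from bi-orderability (\cref{NilpIsBO,ConvexGeodesic}) with L\"oh's counting argument: because $N$ is finite and normal, no geodesic segment can contain two edges labelled by elements of $N$, so if an image of $N$ escaped $N$ one would obtain a geodesic line through an $N$-labelled edge, and inserting $N$-labelled edges at $k+1$ positions into each length-$k$ geodesic segment forces the number of geodesics between vertices at distance $k$ to grow at least like $k!$, contradicting the bound $|S|^k$. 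Second, to pass from this one-graph statement to your two-graph statement, the paper avoids any intrinsic characterization of $N$ in $\Aut$: identify the two graphs via $\varphi$, so that $G_2$ becomes a sharply transitive subgroup $G_2'$ of $\Aut\bigl(\Cay(G_1;S_1)\bigr)$; by \cref{phi(N)} this subgroup acts on $G_1/N_1$, and the element $h' \in G_2'$ carrying $\varphi(g)$ to $\varphi(gn)$ stabilizes the finite block $gN_1$, hence has finite order by sharp transitivity, hence lies in the torsion subgroup of $G_2'$, i.e.\ $\varphi(gN_1) \subseteq N_2\,\varphi(g)$. You should not expect to complete your plan without both of these ingredients or substitutes for them.
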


\begin{cor} \label{NilpotentTorsionHaveIso}
For $i = 1,2$, assume $N_i$ is the torsion subgroup of the finitely generated, nilpotent group~$G_i$. Then there is a connected Cayley graph of finite valency on~$G_1$ that is isomorphic to a Cayley graph on~$G_2$ if and only if $G_1/N_1 \iso G_2/N_2$ and\/ $|N_1| = |N_2|$.
\end{cor}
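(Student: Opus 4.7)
The plan is to prove the two directions of the equivalence separately. The forward direction is a short application of \cref{NilpotentTorsion}, while the reverse direction requires us to exhibit matching Cayley graphs on~$G_1$ and~$G_2$ from the isomorphism data $G_1/N_1 \iso G_2/N_2$ and $|N_1|=|N_2|$.

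For the forward direction, suppose $\varphi \colon \Cay(G_1;S_1) \to \Cay(G_2;S_2)$ is a graph isomorphism, where the source (hence also the target) is connected of finite valency, so $S_1$ and $S_2$ are symmetric finite generating sets. Then \cref{NilpotentTorsion} yields a well-defined affine bijection $\quot\varphi \colon G_1/N_1 \to G_2/N_2$; in particular $G_1/N_1 \iso G_2/N_2$. Well-definedness says $\pi_2 \circ \varphi = \quot\varphi \circ \pi_1$ (where $\pi_i\colon G_i\to G_i/N_i$ is the quotient). Combining surjectivity of $\varphi$ with injectivity of $\quot\varphi$, a short diagram chase shows that $\varphi$ restricts to a bijection from each $N_1$-coset onto the corresponding $N_2$-coset, so $|N_1|=|N_2|$.

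For the reverse direction, fix a group isomorphism $\alpha \colon G_1/N_1 \to G_2/N_2$ and an arbitrary set bijection $\tau \colon N_1 \to N_2$. Pick a finite symmetric generating set $\hat S_1$ of $G_1/N_1$ with $e \notin \hat S_1$, let $\hat S_2 = \alpha(\hat S_1)$, and set
\[ S_i = \pi_i^{-1}(\hat S_i \cup \{e\}) \setminus \{e\} \subseteq G_i. \]
Since $\hat S_i$ and $N_i$ are finite, $S_i$ is finite; it is symmetric because $\hat S_i$ is, and it generates $G_i$ because $S_i \supseteq N_i \setminus \{e\}$ and $\pi_i(S_i) \supseteq \hat S_i$. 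The key observation is that two distinct vertices $g,g'\in G_i$ are adjacent in $\Cay(G_i;S_i)$ if and only if either $gN_i = g'N_i$, or $\pi_i(g)$ and $\pi_i(g')$ are adjacent in $\Cay(G_i/N_i;\hat S_i)$. In other words, $\Cay(G_i;S_i)$ is the lexicographic product of $\Cay(G_i/N_i;\hat S_i)$ with the complete graph on $|N_i|$ vertices. Since $\alpha$ is a graph isomorphism $\Cay(G_1/N_1;\hat S_1) \to \Cay(G_2/N_2;\hat S_2)$, pairing it with $\tau$ coordinatewise (after choosing set-theoretic sections to identify $G_i$ with $G_i/N_i \times N_i$) produces the desired graph isomorphism $\Cay(G_1;S_1) \to \Cay(G_2;S_2)$.

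All the real content is in the reverse direction, and the main idea there is to include whole $N_i$-cosets of the generators of $G_i/N_i$ in $S_i$ rather than individual lifts. This ``blow-up'' of the generating set forces $\Cay(G_i;S_i)$ to depend only on $\Cay(G_i/N_i;\hat S_i)$ and on $|N_i|$, decoupling the torsion coordinate from the quotient coordinate so that $\alpha$ and $\tau$ can be combined independently into a graph isomorphism.
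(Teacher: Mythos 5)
Your proof is correct and follows essentially the same route as the paper: the forward direction is the same application of \cref{NilpotentTorsion}, and the reverse direction is the same ``blow-up'' construction realizing $\Cay(G_i;S_i)$ as a lexicographic product of $\Cay(G_i/N_i;\hat S_i)$ with a fixed graph on $|N_i|$ vertices. The only (immaterial) difference is that by including $N_i\setminus\{e\}$ in $S_i$ you make the fibers complete graphs, whereas the paper omits them and gets edgeless fibers.
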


\begin{cor} \label{UniqueSharp}
If $\Cay(G;S)$ is any Cayley graph of finite valency on a torsion-free, nilpotent group~$G$, then the left-regular representation of~$G$ is the only nilpotent subgroup of $\Aut \bigl( \Cay(G;S) \bigr)$ that acts sharply transitively on the vertices of the Cayley graph.
\end{cor}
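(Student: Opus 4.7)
Let $H$ be any nilpotent subgroup of $\Aut \bigl( \Cay(G;S) \bigr)$ that acts sharply transitively on the vertex set~$G$, and write $L \leq \Aut \bigl( \Cay(G;S) \bigr)$ for the left-regular representation of~$G$. The plan is to realise $\Cay(G;S)$ as a Cayley graph on~$H$ via the orbit map at the identity, apply \cref{NilpotentIso} to conclude that this orbit map is a group isomorphism $\alpha \colon H \to G$, and then read off that every $h \in H$ acts on~$G$ as left-multiplication by $\alpha(h)$.

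First I would set $v_0 = 1_G$ and define $\psi \colon H \to G$ by $\psi(h) = h(v_0)$; this is a bijection since $H$ is sharply transitive. Setting $T := \psi^{-1}(S)$, a routine check (using that each $h \in H$ is a graph automorphism) shows that $\psi$ is an isomorphism of Cayley graphs $\Cay(H;T) \to \Cay(G;S)$. The set $T$ is finite and, by the standard observation that in a connected Cayley graph the elements sending $v_0$ to a neighbour of~$v_0$ generate any vertex-transitive subgroup of the automorphism group, it generates~$H$; in particular, $H$ is finitely generated.

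Next I would apply \cref{NilpotentTorsionHaveIso} to this isomorphism of Cayley graphs: it forces the torsion subgroup of~$H$ to have the same order as the torsion subgroup of~$G$, which is trivial; hence $H$ is torsion-free. Then \cref{NilpotentIso} applies to~$\psi$, giving that $\psi$ is an affine bijection. Because $\psi(1_H) = 1_G$, the translation part of the affine bijection is trivial, so $\psi$ is in fact a group isomorphism $\alpha \colon H \to G$.

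Finally, for $h \in H$ and $g \in G$, the action of $h$ on the vertex $g$ is $h(g) = \psi \bigl( h \cdot \psi^{-1}(g) \bigr) = \alpha(h) \cdot g$, using that $\alpha$ is a homomorphism. Thus every $h \in H$ acts on~$G$ as left-multiplication by $\alpha(h) \in G$, so $H \subseteq L$; equality follows by comparing cardinalities. The substantive content of the argument is concentrated in the invocation of \cref{NilpotentIso} (via \cref{NilpotentTorsionHaveIso}); the rest is bookkeeping, and I do not anticipate any genuine obstacle once those results are granted.
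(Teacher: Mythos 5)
Your proof is correct, but it follows a genuinely different (and more direct) route than the paper's. Both arguments begin with Sabidussi's construction, realising $\Cay(G;S)$ as a Cayley graph $\Cay(H;T)$ via the orbit map $\psi(h)=h(v_0)$. The paper, however, uses \cref{NilpotentTorsionHaveIso} only to conclude that $G\iso H$ as abstract groups, then invokes Babai's theorem (that two isomorphic sharply transitive subgroups yielding isomorphic Cayley graphs are conjugate in the full automorphism group, which in turn needs the consequence of \cref{NilpotentIso} that isomorphic Cayley graphs on~$G$ have connection sets related by a group automorphism), and finally appeals to \cref{NilpotentIsNormal}: the left-regular representation is normal, hence is its own only conjugate. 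You instead apply \cref{NilpotentIso} directly to the Sabidussi isomorphism~$\psi$, conclude that $\psi$ is a group isomorphism $\alpha$ fixing the identity, and compute $h(g)=\alpha(h)\,g$; this bypasses both Babai's conjugacy theorem and the normality corollary, at the cost of redoing a little of Sabidussi's bookkeeping by hand. Your preliminary step of extracting torsion-freeness of~$H$ from \cref{NilpotentTorsionHaveIso} is exactly what is needed before \cref{NilpotentIso} can be applied, and is handled correctly. One small quibble: ``equality follows by comparing cardinalities'' is not a valid inference from $H\subseteq L$ when $G$ is infinite, but you do not need it --- since $\alpha=\psi$ is surjective onto~$G$, the translations $x\mapsto\alpha(h)x$ for $h\in H$ already exhaust the left-regular representation, so $H=L$.
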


\begin{rems} \label{TorsionRems} \ 
\noprelistbreak
	\begin{enumerate} 
	\item In the special case where $G_1$ and~$G_2$ are abelian, \cref{NilpotentTorsion,NilpotentTorsionHaveIso} were proved by C.\,L\"oh \cite{Loeh-WhichAbel}.
	\item \Cref{NilpotentIso} is the special case of \cref{NilpotentTorsion} in which the torsion subgroups $N_1$ and~$N_2$ are trivial.
	\item Although \cref{NilpotentIso,NilpotentTorsion} are stated only for graphs, they obviously remain true in the setting of Cayley digraphs. This is because any isomorphism of digraphs is also an isomorphism of the underlying graphs.
	\item \label{TorsionRems-NonisoGrps}
	Some non-nilpotent groups have some Cayley graphs that are isomorphic to Cayley graphs on nilpotent groups---or even abelian groups. (For example, the Cayley graph in \cref{KleinBottleEg} is isomorphic to $\Cay \bigl( \ZZ \times \ZZ, \{(\pm1,0), (0,\pm1)\} \bigr)$.) \Cref{NilpotentTorsion} implies that any such group must have a subgroup of finite index that is nilpotent, but this fact is well known to be a consequence of Gromov's famous theorem that groups of polynomial growth are virtually nilpotent \cite{Gromov-PolyGrowth}. Indeed, in order to conclude from Gromov's Theorem that $G$ has a nilpotent subgroup of finite index, it suffices to know that $G$ has a connected Cayley graph of finite valency that is \emph{quasi-isometric} (not necessarily isomorphic) to a Cayley graph on a nilpotent group. 
	\end{enumerate}
\end{rems}

\Cref{NilpotentIso} is proved in \cref{NilpotentSect}, and this result is used to prove \cref{NilpotentTorsion} (and its corollaries) in \cref{NilpotentTorsionSect}.
(The arguments are based on techniques of A.\,A.\,Ryabchenko \cite{Ryabchenko-CIAbelian} and C.\,L\"oh \cite{Loeh-WhichAbel}.)
\Cref{TorsionNotNormal} is proved in \cref{OtherTorsionSect}.

\begin{ack}
This work was partially supported by Australian Research Council grant DE130101001 and a research grant from the Natural Sciences and Engineering Research Council of Canada.
\end{ack}

\section{Preliminaries} \label{PrelimSect}

The following result is the special case of \cref{NilpotentIso} in which $G_1$ and~$G_2$ are abelian. (Although not stated in exactly this form in \cite{Ryabchenko-CIAbelian}, the result follows from the proof that is given there and is reproduced in \cite[Thm.~5.3]{Morris-CIInfinite-Arxiv}). This case is not covered by the proof in \cref{NilpotentSect}.

\begin{prop}[Ryabchenko {\cite[Thm.~2]{Ryabchenko-CIAbelian}}] \label{AbelianCase}
Assume
	\begin{itemize}
	\item $G_1$ and~$G_2$ are torsion-free, abelian groups,
	\item $S_i$ is a symmetric, finite generating set of~$G_i$, for $i = 1,2$,
	and
	\item $\varphi$ is an isomorphism from $\Cay(G_1;S_1)$ to $\Cay(G_2;S_2)$.
	\end{itemize}
Then $\varphi$ is an affine bijection. \refnote{RyabchenkoPf}
\end{prop}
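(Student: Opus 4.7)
The plan is to normalize first. Composing $\varphi$ with the translation $x \mapsto \varphi(e_{G_1})^{-1} x$ in $G_2$ (which is itself a graph automorphism of $\Cay(G_2;S_2)$) yields an isomorphism sending $e_{G_1}$ to $e_{G_2}$; since a composition of an affine bijection with a translation is still affine, we may assume $\varphi(e_{G_1}) = e_{G_2}$ and it then suffices to prove that the normalized $\varphi$ is a group isomorphism. Since each $G_i$ is finitely generated, torsion-free, and abelian, the structure theorem gives $G_i \iso \ZZ^{n_i}$; and since the number of vertices at distance at most $k$ from the identity in $\Cay(G_i;S_i)$ is a polynomial in $k$ of degree $n_i$ (a graph-theoretic invariant preserved by $\varphi$), we have $n_1 = n_2$. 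The normalized $\varphi$ restricts to a bijection $\alpha_0 := \varphi|_{S_1} \colon S_1 \to S_2$.

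The bulk of the argument is to promote $\alpha_0$ to a group isomorphism $\alpha \colon G_1 \to G_2$ that agrees with $\varphi$ everywhere. I would do this in two stages. First, verify that $\alpha_0$ preserves every $\ZZ$-linear relation among generators: whenever $\sum_{s \in S_1} n_s \, s = 0$ in $G_1$, also $\sum_{s \in S_1} n_s \, \alpha_0(s) = 0$ in $G_2$. Each such relation arises from a closed walk in $\Cay(G_1;S_1)$ beginning at $e_{G_1}$, which $\varphi$ sends to a closed walk in $\Cay(G_2;S_2)$ beginning at $e_{G_2}$; the purely combinatorial question is whether the resulting closed walk really forces the desired algebraic identity in $G_2$. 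Once all relations are seen to be preserved, $\alpha$ is well-defined by $\ZZ$-linear extension, and one proves $\varphi = \alpha$ by induction on the word length $|x|_{S_1}$: writing $x = y + s$ with $|y|_{S_1} = |x|_{S_1} - 1$, the image $\varphi(x)$ is a neighbor of $\varphi(y) = \alpha(y)$, and the compatibility of $\varphi$ with all distances to already-matched vertices forces $\varphi(x) = \alpha(y) + \alpha_0(s) = \alpha(x)$.

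The main obstacle is the parallelogram step underlying the relation-preservation. Suppose $s + t = u + v$ in $G_1$ with $s,t,u,v \in S_1$; the $4$-cycle $(e_{G_1},s,s+t,u)$ in $\Cay(G_1;S_1)$ maps under $\varphi$ to a $4$-cycle through $e_{G_2}$, $\alpha_0(s)$, and $\alpha_0(u)$, yet the intermediate vertex $\varphi(s+t)$ is not immediately forced to equal $\alpha_0(s) + \alpha_0(t)$, because non-parallelogram $4$-cycles can genuinely exist in abelian Cayley graphs (e.g.\ in $\Cay(\ZZ;\{\pm1,\pm2,\pm3\})$ the relation $1+3 = 2+2$ gives the $4$-cycle $(0,1,4,2)$ with differences $1,3,-2,-2$, which is not of the form $\{s,t,-s,-t\}$). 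To pin down $\varphi(s+t)$, one must simultaneously use the constraints coming from \emph{all} pairs $(w, s+t-w)$ with $w, s+t-w \in S_1$ and invoke torsion-freeness of $G_2$ to rule out spurious candidates; this is where Ryabchenko's combinatorial work is concentrated, and extending the rigidification from $4$-cycles to arbitrary closed walks (equivalently, to higher-degree relations) proceeds by an inductive elaboration of the same idea.
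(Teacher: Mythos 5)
There is a genuine gap: the heart of your argument is never carried out. You correctly identify that the naive parallelogram step fails (your $\Cay(\ZZ;\{\pm1,\pm2,\pm3\})$ example with the $4$-cycle $(0,1,4,2)$ and label multiset $\{1,3,-2,-2\}$ is a valid counterexample to the hope that every $4$-cycle through $e$ is of the form $\{s,t,-s,-t\}$), but your proposed repair --- ``simultaneously use the constraints coming from all pairs $(w,s+t-w)$ and invoke torsion-freeness to rule out spurious candidates,'' then extend ``by an inductive elaboration of the same idea'' to arbitrary closed walks --- is a description of a difficulty, not a resolution of it. No mechanism is given that actually pins down $\varphi(s+t)$, and it is precisely this rigidity statement that constitutes the theorem. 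As written, the proposal reduces the proposition to an unproven claim and explicitly defers that claim to ``Ryabchenko's combinatorial work,'' so it cannot count as a proof. (A smaller issue: your induction on word length at the end assumes that knowing $\varphi$ on a ball plus adjacency data determines $\varphi$ on the next sphere, which again is exactly the rigidity being sought, not something that follows from relation-preservation alone without further argument.)

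For comparison: the paper itself does not prove \cref{AbelianCase}; it quotes it from Ryabchenko (as reproduced in \cite{Morris-CIInfinite-Arxiv}) and explicitly notes that the induction in \cref{NilpotentSect} does not cover the abelian base case. The known proofs avoid the local $4$-cycle analysis entirely. The mechanism is the one visible elsewhere in \cref{PrelimSect}: a torsion-free abelian group is bi-orderable, so a maximal generator $s$ yields a \emph{convex} geodesic line $[s^i]_{i=-\infty}^\infty$ \csee{ConvexGeodesic}; convexity is a graph-theoretic property preserved by $\varphi$, and by \cref{EdgeInCentre} all edges of the image line carry a single label, which forces $\varphi(s^i)=\varphi(s)^i$ globally along that line. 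One then quotients by the corresponding cyclic subgroups and inducts on the rank. If you want to complete your write-up, replacing the ``rigidify all closed walks'' step with this convex-geodesic-plus-induction-on-rank scheme is the viable route; the purely local approach you sketch founders exactly where you say it does.
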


As in \cite{Loeh-WhichAbel}, we use geometric terminology, such as geodesics and convexity, instead of presenting our arguments in group-theoretic language.

\begin{defn}
Let $S$ be a symmetric, finite generating set of a group~$G$. 
	\begin{itemize}
	
	\item For $g ,h \in G$, the distance from~$g$ to~$h$ in the Cayley graph $\Cay(G;S)$ is denoted $\dist_S(g,h)$.
	
	\item A finite sequence $[g_i]_{i=m}^n$ of elements of~$G$ is a \emph{geodesic segment} from~$g_m$ to~$g_n$ in $\Cay(G;S)$ if $\dist_S(g_i,g_j) = |i - j|$ for $m \le i,j \le n$.
	
	\item A bi-infinite sequence $[g_i]_{i=-\infty}^\infty$ of elements of~$G$ is a \emph{geodesic line} in $\Cay(G;S)$ if $\dist_S(g_i,g_j) = |i - j|$ for all $i,j \in \ZZ$.
	
	\item A geodesic line $[g_i]_{i=-\infty}^\infty$ in $\Cay(G;S)$ is \emph{convex} if $[g_i,g_{i+1}, \ldots,g_j]$ is the only path of length $j - i$ from $g_i$ to~$g_j$, for all $i,j \in \ZZ$ (with $i < j$).
	
	\item A geodesic line $[g_i]_{i=-\infty}^\infty$ in $\Cay(G;S)$ is \emph{homogeneous} if there exists $\varphi \in \Aut \bigl( \Cay(G;S) \bigr)$, such that $\varphi(g_i) = g_{i+1}$ for all~$i$.
	
	\item $\Aut_e \bigl( \Cay(G;S) \bigr) = \bigset{ \varphi \in \Aut \bigl( \Cay(G;S) \bigr) }{ \varphi(e) = e }$.
	
	\item Each oriented edge of $\Cay(G;S)$ has a natural label, which is an element of~$S$. Namely, each edge of the form $g \edge gs$ is labelled~$s$. (Note that the same edge with the opposite orientation is labelled~$s^{-1}$.) Each edge in a geodesic segment (or geodesic line) comes with a natural orientation, and therefore has a label. 		
	\end{itemize}
\end{defn}

\begin{lem} \label{phi(G*)}
For $i = 1,2$, assume
	\begin{itemize}
	\item $S_i$ is a symmetric, finite generating set of a group~$G_i$,
	\item $\varphi_i$ is an isomorphism from $\Cay(G_1;S_1)$ to $\Cay(G_2;S_2)$, such that $\varphi_i(e) = e$,
	\item $g_i \in G_i$, 
	\item $S_i^* = \bigset{ \rho(g_i) }{ \rho \in \Aut_e \bigl( \Cay(G_i; S_i) \bigr) }$,
	and
	\item $G_i^* = \langle S_i^* \rangle$.
	\end{itemize}
If $\varphi_1(g_1) = g_2$, then the restriction of $\varphi_2$ to~$G_1^*$ is an isomorphism from $\Cay \bigl( G_1^*;S_1^* \cup (S_1^*)^{-1} \bigr)$ to $\Cay \bigl( G_2^*;S_2^* \cup (S_2^*)^{-1} \bigr)$.
\end{lem}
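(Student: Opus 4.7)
The plan is to view $\Cay \bigl( G_i^*; S_i^* \cup (S_i^*)^{-1} \bigr)$ as the connected component of~$e$ in the enlarged Cayley graph $\Cay \bigl( G_i; S_i^* \cup (S_i^*)^{-1} \bigr)$. Since $\varphi_2$ fixes~$e$ and any graph isomorphism must preserve connected components, it will suffice to show that $\varphi_2$ is an isomorphism from $\Cay \bigl( G_1; S_1^* \cup (S_1^*)^{-1} \bigr)$ to $\Cay \bigl( G_2; S_2^* \cup (S_2^*)^{-1} \bigr)$. Restricting to the component of~$e$, which is precisely $\langle S_i^* \rangle = G_i^*$, will then give the claim.

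The key step is to show that $\varphi_2(g)^{-1} \varphi_2(gs) \in S_2^*$ whenever $g \in G_1$ and $s \in S_1^*$. Writing $s = \rho(g_1)$ with $\rho \in \Aut_e \bigl( \Cay(G_1; S_1) \bigr)$, I would form the composition
\[
\tau = L_{\varphi_2(g)^{-1}} \circ \varphi_2 \circ L_g \circ \rho ,
\]
where $L_h$ denotes left translation by~$h$ (an automorphism of the appropriate Cayley graph). Each factor is a Cayley graph isomorphism, so $\tau \colon \Cay(G_1; S_1) \to \Cay(G_2; S_2)$ is an isomorphism, and a direct check gives $\tau(e) = e$ and $\tau(g_1) = \varphi_2(g)^{-1} \varphi_2(gs)$. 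The hypothesis $\varphi_1(g_1) = g_2$ then lets me transfer $\tau$ into the stabilizer of~$e$: the composition $\tau \circ \varphi_1^{-1}$ lies in $\Aut_e \bigl( \Cay(G_2; S_2) \bigr)$ and sends $g_2 \mapsto \varphi_2(g)^{-1} \varphi_2(gs)$, placing the latter in $S_2^*$ by definition.

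The case $s \in (S_1^*)^{-1}$ reduces to the previous one by replacing $(g,s)$ with $(gs, s^{-1})$. The reverse inclusion (edges in $\Cay \bigl( G_2; S_2^* \cup (S_2^*)^{-1} \bigr)$ pull back under~$\varphi_2^{-1}$ to edges in $\Cay \bigl( G_1; S_1^* \cup (S_1^*)^{-1} \bigr)$) follows by applying the symmetric argument to the isomorphisms $\varphi_1^{-1}$ and $\varphi_2^{-1}$, noting that $\varphi_1^{-1}(g_2) = g_1$. I expect the only obstacle to be the bookkeeping in the construction of~$\tau$: one must carefully track which Cayley graph is the source and which is the target of each factor and verify at each stage what is fixed. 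No deeper machinery is required beyond the fact that left translations are Cayley graph automorphisms and that $S_i^*$ is by definition the orbit of~$g_i$ under $\Aut_e \bigl( \Cay(G_i; S_i) \bigr)$.
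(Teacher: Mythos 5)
Your proof is correct and follows essentially the same route as the paper's: both arguments reduce to showing that $\varphi_2$ is an isomorphism between the enlarged graphs $\Cay\bigl(G_i; S_i^*\cup (S_i^*)^{-1}\bigr)$ on the full groups and then pass to the component of~$e$. Your map $\tau = L_{\varphi_2(g)^{-1}}\circ\varphi_2\circ L_g\circ\rho$, together with the transfer $\tau\circ\varphi_1^{-1}\in\Aut_e\bigl(\Cay(G_2;S_2)\bigr)$, is just an edge-by-edge unpacking of the paper's observation that the edge set of the enlarged graph is a single orbit of the edge $e \edge g_i$ under the full automorphism group, and that composing with $\varphi_1^{-1}$ carries that orbit structure across.
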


\begin{proof}
For convenience, let $A_i = \Aut \bigl( \Cay(G_i; S_i) \bigr)$, $A_i^e = \Aut_e \bigl( \Cay(G_i; S_i) \bigr)$, and $\Gamma_i = \Cay \bigl( G_i;S_i^* \cup (S_i^*)^{-1} \bigr)$.
For $\rho \in A_i$ and $g \in G_i$, define $\rho_g \in A_i^e$ by $\rho_g(x) = \rho(g)^{-1} \, \rho(gx)$. Then, since $S_i^*$ is $A_i^e$-invariant, we have
	$$ \rho(g S_i^*) = \rho(g) \, \rho_g(S_i^*) = \rho(g) S_i^* ,$$
so $\rho$ is an automorphism of~$\Gamma_i$. Since $A_i^e$ is transitive on~$S_i^*$, and the left-regular representation of~$G_i$ is transitive on~$G_i$, this implies that the set of edges of~$\Gamma_i$ is the $A_i$-orbit of the edge $e \edge g_i$. 

Since $\varphi_1$ is a graph isomorphism, it maps the $A_1$-orbit of~$g_1$ to the $A_2$-orbit of $\varphi_1(g_1) = g_2$. So $\varphi_1$ is an isomorphism from~$\Gamma_1$ to~$\Gamma_2$. Since the composition $\varphi_2 \circ \varphi_1^{-1}$ is in~$A_2$, and is therefore an automorphism of~$\Gamma_2$, we conclude that $\varphi_2$ is an isomorphism from~$\Gamma_1$ to~$\Gamma_2$. Since $\Cay \bigl( G_i^*;S_i^* \cup (S_i^*)^{-1} \bigr)$ is the component of~$\Gamma_i$ that contains~$e$, and $\varphi_2(e) = e$, the desired conclusion follows.
\end{proof}

\begin{lem}[{}{\cite[Prop.~2.5(3)]{Loeh-WhichAbel}}] \label{EdgeInCentre}
Let $s \in S$ be the label of some edge of a convex geodesic line in $\Cay(G;S)$. If $s \in Z(G)$, then every edge of the geodesic line is labelled~$s$.
\end{lem}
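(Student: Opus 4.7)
The plan is to use centrality of $s$ to swap the label $s$ past any adjacent label along the geodesic, thereby constructing a second path of the same length whenever two consecutive labels differ, and then invoke convexity to rule this out. A one-step argument handles the forward direction, and a symmetric argument handles the backward direction.

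Write the geodesic as $[g_i]_{i=-\infty}^\infty$ and let $t_i := g_i^{-1} g_{i+1} \in S$ be the label of the $i$th edge, so by hypothesis $t_k = s$ for some~$k$. First I would show that $t_{k+1} = s$. Suppose instead that $t_{k+1} = t \ne s$. Since $s \in Z(G)$,
\[
g_{k+2} = g_{k+1}t = g_k s t = g_k t s,
\]
so $g_k \to g_k t \to g_{k+2}$ is a walk of length~$2$ from $g_k$ to $g_{k+2}$. Its middle vertex $g_k t$ differs from $g_k s = g_{k+1}$ because $t \ne s$, so this walk provides a second path of length~$2$ from $g_k$ to $g_{k+2}$, contradicting convexity. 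Hence $t_{k+1} = s$, and by induction $t_i = s$ for all $i \ge k$.

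The backward direction is entirely analogous: if $t_{k-1} = t' \ne s$, then $g_{k+1} = g_{k-1} t' s = g_{k-1} s t'$ furnishes a competing length-$2$ path $g_{k-1} \to g_{k-1} s \to g_{k+1}$, again violating convexity, so $t_i = s$ for all $i < k$ as well. There is no real obstacle in this argument; centrality of~$s$ is precisely what is needed to transpose it past any adjacent generator, and convexity is strong enough to rule out the resulting alternate path in a single step.
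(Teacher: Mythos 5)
Your proposal is correct and is essentially the paper's own argument: use centrality of $s$ to transpose it past the adjacent label, producing a second length-$2$ path that convexity forbids unless the labels coincide, then induct. The only cosmetic difference is that you treat the backward direction explicitly, whereas the paper subsumes it in the induction.
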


\begin{proof}
Suppose $g_i \edge g_{i+1}$ is labelled~$s$. Let $t$ be the label of $g_{i+1} \edge g_{i+2}$. Then 
	$g_{i+2} = g_i st = (g_i t)s$,
so $[g_i, g_i t, g_{i+2}]$ is a path of length~$2$ from $g_i$ to~$g_{i+2}$. Therefore, convexity implies $[g_i, g_i t, g_{i+2}] = [g_i, g_{i+1},g_{i+2}]$, so $g_i t = g_{i+1} = g_i s$, so $t = s$. This means the label of $g_{i+1} \edge g_{i+2}$ is~$s$. By induction, we see that every edge is labelled~$s$.
\end{proof}

In the remainder of this \lcnamecref{PrelimSect}, we recall some basic facts about nilpotent groups.

\begin{defn}[{}{\cite[p.~38]{LennoxRobinson} or \cite[Notn.~3.4]{Conner-TransNumbs}}]
For a subgroup~$H$ of a group~$G$, we let
	$$ \sqrt{H} = \{\, g \in G \mid \text{$g^k \in H$ for some $k \in \ZZ^+$} \,\} .$$
This is called the \emph{isolator} of~$H$ in~$G$.
\end{defn}

Any finitely generated, abelian group~$A$ is isomorphic to $\ZZ^r \times F$, for some $r \in \ZZ^{\ge 0}$ and finite, abelian group~$F$. The number~$r$ is called the \emph{rank} of~$A$, and is denoted $\rank A$. The following definition generalizes this notion from abelian groups to nilpotent groups.

\begin{defn}[{}{\cite[1.3.3 and p.~85 (1)]{LennoxRobinson}}]
Assume $G$ is a nilpotent group. Then $G$ is solvable, which means there is a series  
	\begin{align*}
	 \{e\} = G_0 \normal G_1 \normal \cdots \normal G_{r-1} \normal G_r = G
	,  \end{align*}
of subgroups of~$G$, such that each quotient $G_i/G_{i-1}$ is abelian. If $G$~is finitely generated, then the \emph{Hirsch rank} of~$G$ is the sum of the ranks of these (finitely generated) abelian groups. That is,
	$$ \rank G = \sum_{i = 1}^r \rank (G_i/G_{i-1})  .$$
It is not difficult to see that this is independent of the choice of the subgroups $G_1,\ldots,G_{r-1}$.
\end{defn}

\begin{lem} \label{NilpLems}
Assume $G$ is a finitely generated, nilpotent group, 
$H$~is a subgroup of~$G$,
and $S$~is a symmetric, finite generating set of~$G$. Then:
\noprelistbreak
	\begin{enumerate}
	
	\item \label{NilpLems-fg}
	\cite[1.2.16, p.~11]{LennoxRobinson} 
	$H$ is finitely generated.

	\item \label{NilpLems-FiniteIndex}
	\cite[2.3.1(ii), p.~39]{LennoxRobinson} 
	$\sqrt{H}$ is a subgroup of~$G$ that contains~$H$, and $\bigl| \sqrt{H} : H \bigr| < \infty$.
	
	\item \label{NilpLems-G/Isolated}
	If $N \normaleq G$, then $\sqrt{N} \normaleq G$ and $G/\sqrt{N}$ is torsion-free. \refnote{IsolateNormal}
	
	\item \label{NilpLems-Zisolated}
	\cite[2.3.8(ii), p.~42]{LennoxRobinson} 
	If $G$ is torsion-free, then $\sqrt{Z(G)} = Z(G)$.
	
	\item \label{NilpLems-CommutatorIsolated}
	\cite[2.3.9(iv), p.~43]{LennoxRobinson} 
	$\bigl[ \sqrt{H}, \sqrt{H} \bigr] \subseteq \sqrt{[H,H]}$.
	
	\item \label{NilpLems-RankQuotient}
	If $N$ is a normal subgroup of~$G$, then $\rank G = \rank N + \rank(G/N)$. Therefore, $\rank(G/N) \le \rank G$, with equality if and only if $N$~is finite.
	
	\item \label{NilpLems-rank}
	\textup(cf.\ \cite[Lem.~2.6, p.~9]{Hall-NilpGrps}\textup) 
	We have $\rank H \le \rank G$, with equality if and only if\/ $|G:H| < \infty$.\refnote{rankH<rankG}
	
	\item \label{NilpLems-MeetCentre}
	\cite[5.2.1, p.~129]{Robinson-CourseGroups}
	If $N$ is a nontrivial normal subgroup of~$G$, then $N \cap Z(G)$ is nontrivial.
	
	\item \label{NilpLems-FiniteCjgs}
	\textup(cf.\ \cite[2.3.8(i), p.~42]{LennoxRobinson}\textup) \refnote{FinitelyManyConjugates}
	If $G$ is torsion-free, then the elements of $Z(G)$ are the only elements of~$G$ that have only finitely many conjugates.
	
	\item \label{NilpLems-BddDist}
	\textup(cf.\ \cite[2.1.2, p.~30]{LennoxRobinson}\textup) Assume $G$ is torsion-free, and $a,b,g \in G$. If
		$$\sup_{k \in \ZZ^+} \dist_S(a^k, gb^k) < \infty ,$$
	then $b = g^{-1} a g$.\refnote{BddDist}
	
	\item \label{NilpLems-distorted}
	\cite[Lem.~3.5(i,iii)]{Conner-TransNumbs}
	For $g \in G$, we have $g \in \sqrt{[G,G]}$ if and only if\/ $\dist_S(e,g^k)/k \rightarrow 0$ as $k \to \infty$. \refnote{DistortedNote}

	\end{enumerate}
\end{lem}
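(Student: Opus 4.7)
The plan is to address each item in turn. Items (1), (2), (4), (5), (8), and (11) are direct citations to the literature. The substantive parts are (3), (6), (7), (9), and (10).

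For (3), normality of $\sqrt{N}$ follows from $(g x g^{-1})^k = g x^k g^{-1} \in N$ whenever $x^k \in N$; and $G/\sqrt{N}$ is torsion-free because $y^k \in \sqrt{N}$ forces $y^{km} \in N$ for some $m$, placing $y \in \sqrt{N}$. For (6), concatenate a subnormal series for $N$ with abelian quotients and the pullback of such a series for $G/N$; the total rank splits additively, and the definition is series-independent, giving $\rank G = \rank N + \rank(G/N)$. The consequence $\rank(G/N) \le \rank G$ is then immediate, with equality iff $\rank N = 0$, i.e.\ iff $N$ is finite. For (7), intersect a subnormal series $\{e\} = G_0 \normal \cdots \normal G_r = G$ with $H$: each $(H \cap G_i)/(H \cap G_{i-1})$ embeds into $G_i/G_{i-1}$, so $\rank H \le \rank G$. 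If $|G : H| < \infty$ then $|G_i : H \cap G_i| < \infty$ at each step, preserving rank. Conversely, equality of total rank forces finite index at each step; a standard extension argument then propagates finite index up the series to give $|G : H| < \infty$.

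For (9), central elements have a unique conjugate, giving the easy direction. Conversely, induct on the nilpotency class of $G$. The abelian base case is trivial. For the inductive step, $G/Z(G)$ has strictly smaller class, and $\bar g$ still has finitely many conjugates, so by induction $\bar g \in Z(G/Z(G))$, i.e.\ $[g, G] \subseteq Z(G)$. The standard commutator identity $[g, xy] = [g, y] \cdot y^{-1}[g, x] y$ then simplifies (since $[g, x] \in Z(G)$) to $[g, xy] = [g, x][g, y]$, so $\psi \colon x \mapsto [g, x]$ is a homomorphism from $G$ to $Z(G)$ with kernel $C_G(g)$ of finite index. Its image is thus a finite subgroup of the torsion-free abelian group $Z(G)$, hence trivial, so $g \in Z(G)$.

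For (10), left-invariance of $\dist_S$ and the identity $a^{-k} g b^k = g c^{-k} b^k$ (with $c = g^{-1} a g$) reduce the hypothesis, up to the additive constant $\dist_S(e, g)$, to $\sup_k \dist_S(c^k, b^k) < \infty$; the task becomes to show $c = b$. Induct on the nilpotency class of $G$. In the abelian base, $\dist_S(c^k, b^k) = \dist_S(e, (c^{-1}b)^k)$ is bounded only if $c^{-1}b$ has finite order, hence $c = b$ by torsion-freeness. For the inductive step, pass to $G/Z(G)$, torsion-free by (4) and (3) and of smaller class; the images $\bar c, \bar b$ still satisfy the bounded-distance condition, so by induction $\bar c = \bar b$. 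Hence $z := c b^{-1} \in Z(G)$, so $c^k = b^k z^k$ by centrality, giving $\dist_S(c^k, b^k) = \dist_S(e, z^k)$; boundedness then forces $z$ to have finite order, so $z = e$. The main obstacle lies in (9) and (10): both demand a clean inductive setup on nilpotency class via the central quotient, which in turn relies on the isolator results (3) and (4) to guarantee that torsion-freeness is preserved under the quotient.
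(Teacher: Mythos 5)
Your proposal is correct: each of the substantive items (3), (6), (7), (9), (10) is given a complete and valid argument, and the remaining items are properly deferred to the cited sources, which matches the structure of the lemma in the paper. The one point of genuine divergence worth noting is in items (9) and (10), where the paper's ``cf.'' citations indicate that it derives these from the isolator and unique-roots machinery of Lennox--Robinson: for (9), the fact that centralizers in a torsion-free nilpotent group are isolated combines with $|G:C_G(g)|<\infty$ and part~(2) to give $C_G(g)=\sqrt{C_G(g)}=G$ immediately; for (10), finiteness of balls and a pigeonhole argument yield $(g^{-1}ag)^n=b^n$ for some $n>0$, whence $b=g^{-1}ag$ by uniqueness of roots in torsion-free nilpotent groups. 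Your alternative is to induct on the nilpotency class through the central quotient $G/Z(G)$ (correctly flagging that its torsion-freeness rests on parts (3) and~(4)), using the homomorphism $x\mapsto[g,x]$ into the torsion-free centre for (9) and reducing to the abelian pigeonhole for (10); this is longer but self-contained, in effect reproving the unique-roots property rather than quoting it. Both routes are sound, and your treatment of (3), (6), and (7) (normality and torsion-freeness of the isolator quotient, additivity of Hirsch rank over a normal subgroup, and the intersection-with-a-series argument for $\rank H\le\rank G$ with the finite-index extension step for the converse) is the standard one.
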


\begin{rem}
\fullCref{NilpLems}{CommutatorIsolated} corrects a typographical error. It is stated in \cite[2.3.9(iv), p.~43]{LennoxRobinson} that equality holds, but a counterexample to this is provided by any finite-index subgroup~$G$ of the discrete Heisenberg group, such that $[G,G]$ is a proper subgroup of $Z(G)$: letting $H = G$, we have
	$$ \bigl[ \sqrt{G}, \sqrt{G} \bigr] = [G,G] \neq Z(G) = \sqrt{[G,G]} .$$
\end{rem}

\begin{defn}
A group~$G$ is \emph{bi-orderable} if it is has a total order~$\prec$ that is invariant under both left-translations and right-translations. (That is, $x \prec y \Rightarrow axb \prec ayb$ for all $x,y,a,b \in G$.)
\end{defn}

\begin{lem}[{}{\cite[Cor.~3.3.2, p.~57]{KopytovMedvedev}}] \label{NilpIsBO}
Every torsion-free, nilpotent group is bi-orderable.
\end{lem}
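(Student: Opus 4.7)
The plan is to induct on the nilpotency class $c$ of~$G$.

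For the base case $c=1$, $G$ is torsion-free abelian, so it embeds as a subgroup of the $\mathbb{Q}$-vector space $G \otimes_{\ZZ} \mathbb{Q}$, which carries a bi-order defined lexicographically with respect to any Hamel basis; this restricts to a bi-order on~$G$.

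For the inductive step, I would consider the central extension
\[
1 \to Z(G) \to G \to G/Z(G) \to 1.
\]
As a subgroup of the torsion-free group~$G$, the center $Z(G)$ is torsion-free abelian and hence bi-orderable by the base case. By \fullcref{NilpLems}{Zisolated}, the isolator of $Z(G)$ equals $Z(G)$, which is exactly the statement that $G/Z(G)$ is torsion-free; since its nilpotency class is $c-1$, the inductive hypothesis yields a bi-order on~$G/Z(G)$.

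The main step is then to combine these two bi-orders into a bi-order on~$G$. I would work with positive cones: pick positive cones $P_Z \subset Z(G)$ and $P_{G/Z} \subset G/Z(G)$, let $\pi\colon G \to G/Z(G)$ denote the quotient, and define
\[
P_G := \pi^{-1}(P_{G/Z}) \cup P_Z.
\]
The verification that $P_G$ is a positive cone for a bi-order on~$G$ is routine. The trichotomy $G = P_G \sqcup P_G^{-1} \sqcup \{e\}$ follows from the corresponding trichotomies for $Z(G)$ and~$G/Z(G)$. Closure under multiplication breaks into three cases depending on which of the two sets each factor lies in, and each case reduces to closure of $P_Z$ or~$P_{G/Z}$. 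Closure under conjugation is where centrality enters: elements of~$P_Z$ are fixed by every conjugation since $Z(G)$ is central, while $\pi^{-1}(P_{G/Z})$ is preserved because $P_{G/Z}$ is conjugation-invariant in~$G/Z(G)$.

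The main conceptual obstacle is knowing that $G/Z(G)$ is torsion-free, but this has already been recorded as \fullcref{NilpLems}{Zisolated}. (Strictly, that item is stated under a finite-generation hypothesis, but the identity $\sqrt{Z(G)}=Z(G)$ in a torsion-free nilpotent group is classical and does not require it.) Everything beyond that point is the standard lexicographic construction for central extensions of bi-orderable groups.
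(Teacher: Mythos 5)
The paper offers no proof of this lemma: it is imported verbatim from \cite[Cor.~3.3.2, p.~57]{KopytovMedvedev}, so there is no internal argument to compare yours against. That said, your proof is correct, and it is essentially the classical one: induct on the nilpotency class, bi-order the torsion-free abelian group $Z(G)$ by embedding it in the rational vector space $G \otimes_{\ZZ} \mathbb{Q}$ (the kernel of $g \mapsto g \otimes 1$ is the torsion subgroup, hence trivial), bi-order $G/Z(G)$ by induction, and glue the two orders lexicographically via the positive cone $\pi^{-1}(P_{G/Z}) \cup P_Z$; centrality of $Z(G)$ (more generally, order-preservation of the conjugation action on the kernel) is exactly what makes this cone conjugation-invariant, and your case analysis for trichotomy and closure under multiplication is sound. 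The one genuinely delicate input is torsion-freeness of $G/Z(G)$, and you handle it properly: you are right that \fullcref{NilpLems}{Zisolated} is stated in this paper only for finitely generated groups while the present lemma carries no such hypothesis, and also right that $\sqrt{Z(G)} = Z(G)$ holds for arbitrary torsion-free nilpotent groups --- for instance, if $g^k \in Z(G)$ but $[g,h] \neq e$, then $g \in \sqrt{Z(\langle g,h\rangle)} = Z(\langle g,h\rangle)$ by the finitely generated case, a contradiction. I see no gaps.
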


\begin{lem}[cf.\ {\cite[1st paragraph of \S4]{Ryabchenko-CIAbelian} or \cite[Prop.~2.9(1)]{Loeh-WhichAbel}}] \label{ConvexGeodesic}
If $S$ is a finite generating set of a nontrivial, bi-orderable group~$G$, then there exists $s \in S$, such that $[s^i]_{i=-\infty}^\infty$ is a convex geodesic line in $\Cay(G;S \cup S^{-1})$.
\end{lem}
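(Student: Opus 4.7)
The plan is to choose $s$ so that its powers form the required line, by taking $s$ to be extremal among the generators under a bi-invariant order. Let $\prec$ be a bi-invariant total order on $G$, and let $s^*$ be the maximum of the finite set $S \cup S^{-1}$ under $\prec$. The first step is to verify $e \prec s^*$: since $G$ is nontrivial and $S \cup S^{-1}$ generates $G$, we have $s^* \neq e$ (otherwise every $t \in S$ would satisfy both $t \preceq e$ and $t^{-1} \preceq e$, and bi-invariance forces $t = e$, contradicting $G \neq \{e\}$); and if $s^* \prec e$ then $(s^*)^{-1} \succ e \succ s^*$, contradicting maximality.

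The heart of the argument is the following product inequality, together with its equality case: for every $n \geq 1$ and every $t_0, \ldots, t_{n-1} \in S \cup S^{-1}$,
$$ t_0 t_1 \cdots t_{n-1} \preceq (s^*)^n, $$
with equality if and only if $t_i = s^*$ for all $i$. Both claims are proved by induction on $n$: assuming $t_0 \cdots t_{n-2} \preceq (s^*)^{n-1}$, right-multiplication by $t_{n-1}$ gives $t_0 \cdots t_{n-1} \preceq (s^*)^{n-1} t_{n-1}$, and left-multiplying $t_{n-1} \preceq s^*$ by $(s^*)^{n-1}$ gives $(s^*)^{n-1} t_{n-1} \preceq (s^*)^n$. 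Both steps are direct applications of bi-invariance, and the equality case drops out by left/right cancellation together with the inductive hypothesis.

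The conclusion follows immediately. A path $e = g_0, g_1, \ldots, g_n$ in $\Cay(G; S \cup S^{-1})$ with endpoint $g_n = (s^*)^n$ yields an expression $(s^*)^n = t_0 \cdots t_{n-1}$ as a product of $n$ elements of $S \cup S^{-1}$, and the inequality forces $n \geq \dist(e,(s^*)^n)$; combined with the obvious path $[e, s^*, \ldots, (s^*)^n]$ this gives $\dist(e,(s^*)^n) = n$, so by left-translation invariance and symmetry of the metric, $[(s^*)^i]_{i \in \ZZ}$ is a geodesic line. The equality case shows that any path of length $n$ from $e$ to $(s^*)^n$ must equal $[e, s^*, \ldots, (s^*)^n]$; left-translating, the line is convex.

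Finally, to land inside $S$ as required: if $s^* \in S$ we set $s = s^*$, while if $s^* \in S^{-1} \setminus S$ we set $s = (s^*)^{-1} \in S$ and observe that $[s^i]_{i \in \ZZ}$ is just the reindexing $i \mapsto -i$ of $[(s^*)^i]_{i \in \ZZ}$, which preserves both the geodesic property and convexity. I do not anticipate any substantive obstacle; the argument is a direct exploitation of bi-invariance, and the only mildly delicate point is the switch between $S$ and $S^{-1}$ in this last step.
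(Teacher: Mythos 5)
Your proof is correct and follows essentially the same route as the paper's: take $s$ maximal in $S\cup S^{-1}$ under a bi-invariant order and prove $s_1\cdots s_k\preceq s^k$ with equality only when every factor equals $s$. The only cosmetic difference is at the end, where the paper arranges $s\in S$ by replacing $\prec$ with $x\prec' y\Leftrightarrow x^{-1}\prec y^{-1}$, whereas you reindex the line by $i\mapsto -i$; both work.
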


\begin{proof}
Let $\prec$ be a total order on~$G$ that is invariant under both left-translations and right-translations. Since the set $S \cup S^{-1}$ is finite, it has a maximal element~$s$ under this order. We may assume $s \in S$, \refnote{InverseOrder}
by replacing $\prec$ with the order $\prec'$ defined by $x \prec' y \Leftrightarrow x^{-1} \prec y^{-1}$, if necessary.

For $a,b,c,d \in G$ with $a \preceq b$ and $c \preceq d$, the invariance under translations implies that $ac \preceq bd$ (and equality holds iff $a = b$ and $c = d$). \refnote{ac<bd}
By induction on~$k$, we conclude that $s_1 s_2 \cdots s_k \preceq s^k$ for all $s_1, s_2, \ldots, s_k \in S \cup S^{-1}$, and that equality holds iff $s_1 = s_2 = \cdots = s_k = s$. This implies that $[s^i]_{i=-\infty}^\infty$ is a convex geodesic line.
\end{proof}

\section{Torsion-free nilpotent groups} \label{NilpotentSect}

In this section, we prove \cref{NilpotentIso}.
Let $\varphi$ be an isomorphism from $\Cay(G_1;S_1)$ to $\Cay(G_2;S_2)$.
By composing with a left translation, we may assume $\varphi(e) = e$.\refnote{TranslateToIdentity}
(Under this assumption, we will show that $\varphi$ is a group homomorphism. Since $\varphi$ is bijective, it must then be a group isomorphism.)
The proof is by induction on $\rank G_1 + \rank G_2$.

\begin{notation*}
Let $Z_i^\dagger = Z(G_i) \cap \sqrt{[G_i,G_i]}$ for $i = 1,2$. 
\end{notation*}

\begin{Step} \label{ZGeodToGeod}
For every $g \in G_1$ and $z \in Z_1^\dagger$, there exists $\sigma_g(z) \in G_2$, such that $\varphi(gz^k) = \varphi(g) \, \sigma_g(z)^k$ for all $k \in \ZZ$.
\end{Step}

\begin{proof}
By composing with left translations in~$G_1$ and~$G_2$, we may assume $g = e$. \refnote{g=e}
Define $S_1^*$, $S_2^*$,  $G_1^*$, and~$G_2^*$ as in \cref{phi(G*)}, with $g_1 = z$ and $g_2 = \varphi(z)$.
Combining \cref{NilpIsBO,ConvexGeodesic} yields $s \in S_2^*$, such that 
	$$ \text{$[s^i]_{i=-\infty}^\infty$ is a convex geodesic line in $\Cay \bigl( G_2^*;S_2^* \cup (S_2^*)^{-1} \bigr)$.} $$
The definition of~$S_2^*$ implies there is an isomorphism $\psi$ from $\Cay(G_1; S_1)$ to $\Cay(G_2; S_2)$ with $\psi(e) = e$ and $\psi(z) = s$. \refnote{psi(z)=s}
Since \cref{phi(G*)} tells us that $\psi$ restricts to an isomorphism from $\Cay \bigl( G_1^*;S_1^* \cup (S_1^*)^{-1} \bigr)$ to $\Cay \bigl( G_2^*;S_2^* \cup (S_2^*)^{-1} \bigr)$, we know that $\psi^{-1} \bigl( [s^i]_{i=-\infty}^\infty \bigr)$ is a convex geodesic line in $\Cay \bigl( G_1^*;S_1^* \cup (S_1^*)^{-1} \bigr)$. From the choice of~$\psi$, this geodesic line contains the edge $e \edge z$, so \cref{EdgeInCentre} tells us that this geodesic line must be $[z^i]_{i=-\infty}^\infty$. This means $\dist_{S_1^*}(z^i, z^j) = |i - j|$ for all $i,j \in \ZZ$. We conclude from \fullcref{NilpLems}{distorted} that $z \notin \sqrt{[G_1^*, G_1^*]}$.

On the other hand, since $z \in Z_1^\dagger$, we know that $z \in \sqrt{[G_1,G_1]}$. Therefore $\sqrt{[G_1^*,G_1^*]} \neq \sqrt{[G_1,G_1]}$. This implies that $[G_1^*,G_1^*]$ has infinite index in $[G_1,G_1]$  \fullccf{NilpLems}{FiniteIndex}, so $G_1^*$ must have infinite index in~$G_1$ \fullccf{NilpLems}{CommutatorIsolated}. \refnote{CommutatorSmall}
Therefore, $\rank G_1^* + \rank G_2^* < \rank G_1 + \rank G_2$ (see \cref{NilpLems}(\ref{NilpLems-rank})), so our induction hypothesis tells us that the restriction of~$\varphi$ to~$G_1^*$ is a group isomorphism onto~$G_2^*$. Hence, $\varphi(z^k) = \varphi(z)^k$ for all~$k$, so we may let $\sigma_g(z) = \varphi(z)$.
\end{proof}

\begin{Step} \label{ZtoZ}
 We have $\varphi( x Z_1^\dagger) = \varphi(x)Z_2^\dagger$, for all $x \in G_1$.
 \end{Step}
 
 \begin{proof}
By composing with left translations in~$G_1$ and~$G_2$, we may assume $x = e$.
Then, since $\varphi^{-1}$ is also an isomorphism, it suffices to show $\varphi( Z_1^\dagger \bigr) \subseteq Z_2^\dagger$.
 Fix $z \in Z_1^\dagger$. 
 For all $k \in \ZZ$, we have $\dist_{S_1}(z^k, g z^k) = \dist_{S_1}(e, g)$ (because $z \in Z(G_1)$). Since $\varphi$ is a graph isomorphism, this implies 
	$\dist_{S_2} \bigl( \varphi(z)^k, \varphi(g) \, \sigma_g(z)^k \bigr)$ does not depend on~$k$.
So \cref{NilpLems}(\ref{NilpLems-BddDist}) tells us that $\varphi(g)^{-1} \varphi(z) \varphi(g) = \sigma_g(z)$. 
From the definition of $\sigma_g(z)$, we see that $\dist_S \bigl( e, \sigma_g(z) \bigr) = \dist_S(e,z)$, so this implies that $\varphi(g)^{-1} \varphi(z) \varphi(g)$ is in a ball of fixed radius, independent of~$g$. Since $\varphi(g)$ is an arbitrary element of~$G_2$, we conclude that $\varphi(z)$ has only finitely many conjugates. Since $G_2$ is torsion-free nilpotent, this implies $\varphi(z) \in Z(G_2)$ \fullcsee{NilpLems}{FiniteCjgs}.

Also, we see from \fullcref{NilpLems}{distorted} that $\varphi \bigl(\! \sqrt{[G_1,G_1]} \,\bigr) = \sqrt{[G_2,G_2]}$ (since $\varphi$~is a graph isomorphism). Therefore $\varphi(z) \in \sqrt{[G_2,G_2]}$. So $\varphi(z) \in Z_2^\dagger$.
\end{proof}

\begin{Step}
Completion of the proof of \cref{NilpotentIso}.
\end{Step}

\begin{proof}
Let $\quot{G_i} = G_i/Z_i^\dagger$ for $i = 1,2$. Note that $Z_1^\dagger$ is finitely generated \fullcsee{NilpLems}{fg}. Therefore, by passing to a power of the graphs $\Cay(G_1; S_1)$ and $\Cay(G_2; S_2)$ (or, in other words, by replacing $S_i$ with an appropriate product $(S_i \cup \{e\})  (S_i \cup \{e\})  \cdots (S_i \cup \{e\})$), we may assume that $\Cay(Z_1^\dagger; S_1 \cap Z_1^\dagger)$ is connected. \refnote{Zconnected}
From \cref{ZtoZ}, we know that $\varphi$ induces a well-defined isomorphism $\quot\varphi$ from $\Cay(\quot{G_1} ; S_1)$ to $\Cay(\quot{G_2} ; S_2)$. 

We may assume that $G_1$ and~$G_2$ are not both abelian (otherwise, Ryabchenko's Theorem \pref{AbelianCase} applies), so either $[G_1,G_1]$ or $[G_2,G_2]$ is nontrivial. This implies that either $Z_1^\dagger$ or $Z_2^\dagger$ is nontrivial \fullcsee{NilpLems}{MeetCentre}, and therefore infinite (since $G_1$ and~$G_2$ are torsion-free). Hence, we have $\rank \quot{G_1} + \rank \quot{G_2} < \rank G_1 + \rank G_2$ \fullcsee{NilpLems}{RankQuotient}, so, by induction on $\rank G_1 + \rank G_2$, we may assume that $\quot\varphi$ is a group isomorphism from~$\quot{G_1}$ to~$\quot{G_2}$ (since \fullcref{NilpLems}{Zisolated} implies that $\quot{G_1}$ and $\quot{G_2}$ are torsion free).\refnote{G/ZdaggerTorsFree}

For each $g \in G_1$ and $z \in Z_1^\dagger$, we have
	\begin{align*}
	 \dist_{S_2} \bigl( \sigma_e(z)^k, \varphi(g) \, \sigma_g(z)^k \bigr) 
	&= \dist_{S_2} \bigl( \varphi( z^k), \varphi( g z^k) \bigr)
	\\&= \dist_{S_1}(z^k,g z^k)
	\\&= \dist_{S_1}(e,g) 
	, \end{align*}
since $z \in Z(G_1)$. Then, from \fullcref{NilpLems}{BddDist} (and the fact that \cref{ZtoZ} tells us that $\sigma_e(z)$ is in~$Z_2^\dagger$ and therefore commutes with $\varphi(g)$), we see that $\sigma_g(z) = \sigma_ e(z)$. This means $\sigma_g(z)$ is independent of~$g$ (so we may drop the subscript).

Fix some $g \in G_1$ and $s \in S_1$. We have $\varphi(gs) = \varphi(g) \, \varphi(s) \, \sigma(z)$, for some $z \in Z_1^\dagger$ (because $\quot{\varphi}$ is a homomorphism and the surjectivity in \cref{ZtoZ} tells us $\sigma(Z_1^\dagger) = Z_2^\dagger$). Consider any $k \ge 0$ with $sz^k \in S_1$. Then
	$$ \varphi(gs z^k)
	= \varphi(gs) \, \sigma(z)^k
	= \varphi(g) \, \varphi(s) \, \sigma(z) \, \sigma(z)^k
	= \varphi(g) \, \varphi(s z^{k+1}) .$$
Since $\varphi$ is a graph homomorphism and, by assumption, $s z^k \in S_1$, we must have $\varphi(s z^{k+1}) \in S_2$. So $s z^{k+1} \in S_1$. By induction (with $k = 0$ as the base case), we conclude that $sz^k \in S_1$ for all $k \in \ZZ^+$. Since $S_1$ is finite (and $G_1$ is torsion-free), this implies $z = e$. So $\varphi(gs) = \varphi(g) \, \varphi(s)$. Since $g$ is an arbitrary element of~$G_1$ and $s$~is an arbitrary element of the generating set~$S_1$, this implies that $\varphi$ is a group homomorphism.
\end{proof}

\section{Nilpotent groups that may have torsion} \label{NilpotentTorsionSect}

\begin{prop} \label{phi(N)}
Assume
\noprelistbreak
	\begin{itemize}
	\item $S$ is a finite generating set of the group~$G$,
	and
	\item $N$ is a finite, normal subgroup of~$G$, such that $G/N$ is bi-orderable.
	\end{itemize}
Then every automorphism of $\Cay(G;S)$ induces a well-defined automorphism of $\Cay(G/N;S)$.
\end{prop}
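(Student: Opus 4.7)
The plan is to show $\varphi$ maps each left coset of~$N$ to a left coset of~$N$. Replacing $\varphi$ with $L_{\varphi(e)^{-1}} \circ \varphi$ (still a graph automorphism), we may assume $\varphi(e) = e$; it then suffices to show $\varphi(N) = N$, since the statement for an arbitrary coset $gN$ follows by applying this reduction to the conjugate $L_{\varphi(g)^{-1}} \circ \varphi \circ L_g$, which also fixes~$e$. Set $d := \max_{n \in N} \dist_S(e,n)$, which is finite since $N$~is finite. Then $\varphi(N) \subseteq B_d(e)$ and $|\varphi(N)| = |N|$, so we only need to prove $\varphi(N) \subseteq N$.

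To identify~$N$ inside $B_d(e)$ by a graph-theoretic condition that is preserved by~$\varphi$, my plan is to use bi-orderability of~$G/N$ via \cref{ConvexGeodesic}. Applied to~$G/N$ with the generating set $\pi(S \cup S^{-1})$ (where $\pi \colon G \to G/N$ is the quotient), the lemma yields an element $\bar s \in \pi(S \cup S^{-1})$ such that $[\bar s^k]_{k \in \ZZ}$ is a convex geodesic line in $\Cay \bigl( G/N; \pi(S \cup S^{-1}) \bigr)$. Lifting to any $s \in S \cup S^{-1}$ with $\pi(s) = \bar s$, the sequence $[s^k]_{k \in \ZZ}$ is itself a geodesic line in $\Cay(G;S)$, since distances in~$G$ dominate those in the quotient. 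The intended characterization is: $g \in N$ if and only if $\sup_{k \in \ZZ} \dist_S(g s^k, s^k) < \infty$. The forward implication is immediate from normality of~$N$: writing $g s^k = s^k(s^{-k}g s^k)$ with $s^{-k}g s^k \in N$ gives $\dist_S(g s^k, s^k) \le d$.

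The main obstacle will be the reverse direction: for $g \notin N$, one must force the conjugates $s^{-k} g s^k$ to leave $B_d(e)$ for some~$k$. This clearly fails when $g$ centralizes~$s$, so a single convex-geodesic direction is insufficient. To remedy this, I would invoke a sufficiently rich family of convex-geodesic directions (obtained by varying the bi-order on~$G/N$, or by replacing~$s$ with suitable products of generators), ensuring that no element of $G \setminus N$ is simultaneously centralized by every chosen direction; equivalently, one shows that the intersection of the centralizers $C_G(s_i)$ over enough convex-geodesic generators~$s_i$ is contained in~$N$, leveraging bi-orderability of~$G/N$ to construct enough distinct directions. With the resulting intersection characterization of~$N$ in hand, it is manifestly preserved by the graph isometry~$\varphi$, yielding $\varphi(N) \subseteq N$ and completing the argument.
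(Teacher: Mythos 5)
Your reduction to showing $\varphi(N)=N$ for $\varphi(e)=e$ is fine, and the use of \cref{ConvexGeodesic} to produce a convex geodesic direction in $G/N$ is the same starting point as the paper. But the core of your argument --- the characterization ``$g \in N$ iff $\sup_{k}\dist_S(gs^k,s^k)<\infty$,'' repaired by intersecting over many directions --- cannot work, and no choice of directions fixes it. The set of $g$ satisfying your condition for \emph{every} direction $s$ is (essentially) the set of elements with boundedly finite conjugacy class, which always contains $Z(G)$ and is in general strictly larger than $N$. The cleanest counterexample is $G=\ZZ\times(\ZZ/2\ZZ)$ with $N=\{0\}\times\ZZ/2\ZZ$: the group is abelian, so every $g\in G$ displaces every geodesic line by a bounded amount, and your candidate set is all of $G$ rather than $N$. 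Since $N\le \Delta(G)$ (the FC-centre) for any finite normal subgroup but $\Delta(G)=N$ fails in general, the reverse implication is irreparably false, not merely in need of more directions. A secondary problem: even where the condition did cut out $N$, it is not ``manifestly preserved'' by $\varphi$, because $\dist_S(gs^k,s^k)$ involves the left translation by $g$, and a graph automorphism need not conjugate left translations to left translations.

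The ingredient you are missing is a genuinely graph-theoretic invariant that separates $N$-labelled edges from the rest, and this is where the paper's proof (following L\"oh) goes in a different direction. One considers the orbit $N^*$ of $N$ under $\Aut_e\bigl(\Cay(G;S)\bigr)$, uses \cref{ConvexGeodesic} in $G/N$ to produce a \emph{homogeneous} geodesic line $[g^i]_{i=-\infty}^{\infty}$ in $\Cay(G;N^*)$ with $g\in N^*$, and then counts geodesic segments: if some edge of a homogeneous geodesic line were labelled by an element of the finite normal subgroup $N$, then into any geodesic segment of length $k$ one could insert a single $N$-labelled edge in $k+1$ distinct positions (normality lets the inserted label be pushed past the other edges), while no geodesic can carry two $N$-labelled edges. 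This forces the number of geodesic segments between points at distance $k$ to grow at least like $k!$, contradicting the trivial bound $|S|^k$. That factorial-versus-exponential contradiction is the step your proposal has no substitute for.
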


\begin{proof}
Let 
	$$N^* = \{\, \varphi(n) \mid \varphi \in \Aut_e \bigl( \Cay(G; S) \bigr), n \in N \,\} .$$
It is important to note that, since $N$ is contained in a ball of finite radius centred at~$e$, and $N^*$~must be contained in that same ball, the set~$N^*$ is finite.
We wish to show $N^* \subseteq N$.

Assume, without loss of generality, that $N \subseteq S$ (by passing to a power of $\Cay(G;S)$).
Since $\langle N^* \rangle$ is obviously invariant under $\Aut_e \bigl( \Cay(G; S) \bigr)$, there is no harm in assuming $\langle N^* \rangle = G$\refnote{Nstar}.

Let $\quot G = G/N$, and let $\quot{N^*}=\{gN \mid g \in N^*\}$. We wish to show $\quot G$ is trivial. Suppose not. (This will lead to a contradiction.) Since, by assumption, $\quot G$ is bi-orderable, \cref{ConvexGeodesic} provides $g \in N^*$, such that $[\quot g^i]_{i=-\infty}^\infty$ is a geodesic line in $\Cay(\quot G; \quot{N^*})$. Then, since the natural map $\Cay(G; N^*) \to \Cay(\quot G; \quot{N^*})$ decreases distances, it is clear that $\gamma = [g^i]_{i=-\infty}^\infty$ is a geodesic line in $\Cay(G; N^*)$. By the definition of~$N^*$, there exists $\varphi \in \Aut_e \bigl( \Cay(G;S) \bigr)$, such that $\varphi(g) \in N$. Then $\varphi(\gamma)$ is a geodesic line that contains the edge $e \edge n$ for some $n \in N$.

To obtain the contradiction that completes the proof, we use an argument of C.\,L\"oh \cite[first paragraph of page~105]{Loeh-WhichAbel}. Write $\varphi(\gamma) = [h_i]_{i=-\infty}^\infty$. 
For each $k \in \NN$, let $\#(k)$ be the number of geodesic segments from $h_i$ to~$h_{i + k}$. (Since $\gamma = [g^i]_{i=-\infty}^\infty$ is obviously homogeneous, we know that $\varphi(\gamma)$ is also homogeneous, so $\#(k)$ is independent of the choice of~$i$.)
We may assume $h_0 = e$ (so $h_1 = n$). 
Since $N$ is a finite normal subgroup of~$G$, it is easy to see that no geodesic segment can contain two edges that are labelled by elements of~$N$. (Namely, if $(n, s_1,\ldots,s_k, n')$ is a path in $\Cay(G;N)$, then there exists $n'' \in N_1$, such that $n'' s_1 \cdots s_k = n s_1 \cdots s_k n'$, so $(n'' ,s_1, \ldots, s_k)$ is a shorter path with the same endpoints.) 
Hence, for all $k > 1$, no geodesic segment from $h_1$ to~$h_k$ has any edges that are labelled by elements of~$N$. (Otherwise, concatenating $(n)$ at the start would yield a geodesic segment from~$h_0$ to $h_k$ with more than one edge labelled by elements of~$N$.)

For any geodesic segment $\gamma' = (s_1,\ldots,s_k)$ from $h_1$ to~$h_{k + 1}$, we can construct $k + 1$ different geodesic segments $\gamma_1,\ldots,\gamma_{k+1}$ from $h_0$ to~$h_{k + 1}$, by inserting a single edge labelled by an element of~$N$, as follows:
	$$ \gamma_i = (s_1,s_2,\ldots,s_{i-1}, n_i, s_i,\ldots,s_k) ,$$
where $n_i \in N$ is chosen so that $n s_1 s_2 \cdots s_{i-1} = s_1 s_2 \cdots s_{i-1} n_i$. (This is possible because the subgroup~$N$ is normal.) This implies $\#(k + 1) \ge (k+1) \cdot \#(k)$, for all~$k$. Therefore $\#(k) \ge k!$\,. However, it is clear that $\#(k) \le |S|^k$, so this contradicts the fact that factorials grow faster than exponentials.
\end{proof}

Combining this \lcnamecref{phi(N)} with \cref{NilpotentIso} yields the following slight generalization of \cref{NilpotentTorsion} that allows $G_1$ and~$G_2$ to be slightly non-nilpotent:

\begin{thm} \label{AlmNilpotent}
Assume
\noprelistbreak
	\begin{itemize}
	\item $S_i$ is a symmetric, finite generating set of the group~$G_i$, for $i = 1,2$,
	\item $N_i$ is a finite, normal subgroup of~$G_i$, such that $G_i/N_i$ is torsion-free nilpotent, for $i = 1,2$,
	and
	\item $\varphi$ is an isomorphism from $\Cay(G_1;S_1)$ to $\Cay(G_2;S_2)$.
	\end{itemize}
Then $\varphi$ induces a well-defined affine bijection $\quot\varphi \colon G_1/N_1 \to G_2/N_2$.
\end{thm}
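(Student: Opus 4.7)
The plan is to first show that $\varphi$ induces a well-defined bijection $\quot\varphi\colon G_1/N_1 \to G_2/N_2$, and then apply \cref{NilpotentIso} to conclude that the induced graph isomorphism from $\Cay(G_1/N_1;\quot{S_1})$ to $\Cay(G_2/N_2;\quot{S_2})$ is an affine bijection. By composing $\varphi$ with a left translation I may assume $\varphi(e)=e$. Since $G_i/N_i$ is torsion-free nilpotent, \cref{NilpIsBO} shows it is bi-orderable, so \cref{phi(N)} applies to each $\Cay(G_i;S_i)$ with normal subgroup~$N_i$: every automorphism of $\Cay(G_i;S_i)$ preserves $N_i$-cosets.

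The crux is to show that $\varphi$ also sends $N_1$-cosets to $N_2$-cosets, equivalently, that each translated isomorphism $\varphi_g(x) := \varphi(g)^{-1}\varphi(gx)$---a member of the set $\mathrm{Iso}_e$ of isomorphisms $\Cay(G_1;S_1) \to \Cay(G_2;S_2)$ fixing $e$---satisfies $\varphi_g(N_1) \subseteq N_2$. Since $\mathrm{Iso}_e = \Aut_e\bigl(\Cay(G_2;S_2)\bigr) \circ \varphi$, applying \cref{phi(N)} to $\Cay(G_1;S_1)$ gives
\[
\bigcup_{\varphi' \in \mathrm{Iso}_e} \varphi'(N_1) = \Aut_e\bigl(\Cay(G_2;S_2)\bigr) \cdot \varphi(N_1) = \varphi\bigl(\Aut_e(\Cay(G_1;S_1)) \cdot N_1\bigr) = \varphi(N_1),
\]
so $\tilde N := \varphi(N_1)$ is $\Aut_e\bigl(\Cay(G_2;S_2)\bigr)$-invariant. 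Applying the same reasoning to the translated isomorphism $x \mapsto \varphi(n)^{-1}\varphi(nx)$ for each $n \in N_1$ shows $\tilde N$ is also closed under inversion. It therefore suffices to prove $\tilde N \subseteq N_2$.

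To establish this I adapt the proof of \cref{phi(N)}. After passing to a suitable power of the two graphs one has $\tilde N \cup N_2 \subseteq S_2$. Assume for contradiction that $\tilde N \not\subseteq N_2$. Using bi-orderability of $G_2/N_2$ together with \cref{ConvexGeodesic}, and choosing the bi-order on $G_2/N_2$ so that the maximum of $\quot{S_2}$ lies in $\quot{\tilde N}$ (exploiting the flexibility of bi-orders on torsion-free nilpotent groups), I obtain $g \in \tilde N$ with nontrivial image $\bar g$ in $G_2/N_2$ such that $[\bar g^i]_{i\in\ZZ}$ is a convex geodesic line in $\Cay(G_2/N_2;\quot{S_2})$; consequently $[g^i]_{i\in\ZZ}$ is a geodesic line in $\Cay(G_2;S_2)$. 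Writing $g = \varphi(n)$ with $n \in N_1$ and applying $\varphi^{-1}$, I obtain a geodesic line in $\Cay(G_1;S_1)$ whose first edge is labelled by $n \in N_1$. The factorial-versus-exponential counting argument from the proof of \cref{phi(N)}---inserting $N_1$-edges at each of the $k+1$ possible positions in a length-$k$ geodesic, using normality of $N_1$ in $G_1$---then yields the contradiction.

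With $\tilde N \subseteq N_2$ established (and the symmetric argument applied to $\varphi^{-1}$ giving $\varphi^{-1}(N_2) \subseteq N_1$), the induced map $\quot\varphi$ is a well-defined bijection and is easily seen to be a graph isomorphism $\Cay(G_1/N_1;\quot{S_1}) \to \Cay(G_2/N_2;\quot{S_2})$. Since both quotient groups are torsion-free nilpotent, \cref{NilpotentIso} yields that $\quot\varphi$ is an affine bijection. The hardest part is the adaptation of \cref{phi(N)}: its edge-insertion step requires the ``problematic'' element $g$ to lie in a \emph{normal subgroup} of the ambient group, but $\tilde N = \varphi(N_1)$ is in general only a finite subset of $G_2$. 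The trick is to transport the analysis to $\Cay(G_1;S_1)$ via $\varphi^{-1}$, where the corresponding edge $e \edge n$ is labelled by an element of the normal subgroup $N_1$; the delicate point is to ensure that the transported sequence really is a geodesic in $\Cay(G_1;S_1)$, which requires the careful choice of bi-order indicated above.
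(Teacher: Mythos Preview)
Your overall strategy is sound, and the reduction to the torsion-free case via \cref{NilpotentIso} at the end is exactly what the paper does. But there is a genuine gap in the middle step, at the sentence ``choosing the bi-order on $G_2/N_2$ so that the maximum of $\quot{S_2}$ lies in $\quot{\tilde N}$ (exploiting the flexibility of bi-orders on torsion-free nilpotent groups).'' This cannot in general be arranged. The set $\quot{S_2}$ is a fixed finite symmetric generating set of $G_2/N_2$, and $\quot{\tilde N}$ is merely some finite subset containing a nontrivial element; there is no reason any element of $\quot{\tilde N}$ should be $\prec$-maximal in $\quot{S_2}$ for \emph{some} bi-order~$\prec$. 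Already for $G_2/N_2 = \ZZ$ (which has only two bi-orders) one sees that the $\prec$-maximum of $\quot{S_2}$ is forced, and need not lie in the image of~$\tilde N$. In the proof of \cref{phi(N)} this issue does not arise because one first reduces to the case $\langle N^*\rangle = G$ and then applies \cref{ConvexGeodesic} with $N^*$ itself as the generating set; but you cannot imitate that reduction here, since you need the resulting geodesic to be a geodesic for the \emph{original} generating set $S_2$ in order to transport it through~$\varphi^{-1}$.

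The paper sidesteps this difficulty entirely. Instead of trying to push $N_1$ forward through~$\varphi$ and analyse its image in~$G_2$, it uses $\varphi$ to realize $G_2$ as a sharply transitive subgroup $G_2'$ of $\Aut\bigl(\Cay(G_1;S_1)\bigr)$, via $h'(x)=\varphi^{-1}\bigl(h\,\varphi(x)\bigr)$. Now \cref{phi(N)} applies \emph{verbatim} (no adaptation needed): every element of~$G_2'$ permutes the $N_1$-cosets in~$G_1$. Given $g\in G_1$ and $n\in N_1$, the unique $h\in G_2$ with $\varphi(gn)=h\,\varphi(g)$ satisfies $h'(g)=gn$, so $h'$ maps the finite set $gN_1$ to itself; sharp transitivity then forces $h'$ to have finite order, whence $h\in N_2$ (since $G_2/N_2$ is torsion-free). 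This gives $\varphi(gN_1)\subseteq N_2\,\varphi(g)$ directly, with no need to locate a special geodesic or choose a bi-order.
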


\begin{proof}
By using $\varphi$ to identify $\Cay(G_1;S_1)$ with $\Cay(G_2;S_2)$, we can realize $G_2$ as a sharply transitive subgroup~$G_2'$ of $\Aut \bigl( \Cay(G_1; S_1) \bigr)$. (Namely, for $h \in G_2$, define $h'(x) = \varphi^{-1} \bigl( h \, \varphi(x) \bigr)$.) 

For any $g \in G_1$ and $n \in N_1$, there exists $h \in G_2$, such that $\varphi(gn) = h \, \varphi(g)$. This means $h'g = gn \in g N_1$. 
From \cref{phi(N)} (and \cref{NilpIsBO}), we know that $G_2'$ factors through to a well-defined group of permutations on $G_1/N_1$, so this implies $h'(gN_1) = g N_1$. Since $gN_1$ is finite (and $G_2'$ is sharply transitive), we conclude that $h'$~has finite order, so $h'$~is in the torsion subgroup~$N_2'$ of~$G_2'$. This means $h \in N_2$, so $\varphi(gn) = h \, \varphi(g) \in N_2 \, \varphi(g)$. Therefore $\varphi(gN_1) \subseteq N_2 \, \varphi(g)$. So $\varphi$ induces a well-defined function $\quot\varphi \colon G_1/N_1 \to G_2/N_2$.
\end{proof}

\begin{defn}[{}{\cite[p.~305]{Li-CISurvey}}]
The \emph{wreath product} (or \emph{lexicographic product}) of two graphs $X_1 = (V_1,E_1)$ and $X_2 = (V_2,E_2)$ is the graph  $X_1[X_2]$ with vertex set $V_1 \times V_2$, such that $(v_1,v_2)$ is adjacent to $(v_1',v_2')$ if and only if either
	\begin{itemize}
	\item $v_1$ is adjacent to $v_1'$ in~$X_1$,
	or
	\item $v_1 = v_1'$ and $v_2$ is adjacent to $v_2'$ in~$X_2$.
	\end{itemize}
\end{defn}

\begin{proof}[Proof of \cref{NilpotentTorsionHaveIso}]
($\Rightarrow$) Let $S_1$ and~$S_2$ be finite, symmetric generating sets of $G_1$ and~$G_2$, respectively, such that there is an isomorphism~$\varphi$ from $\Cay(G_1;S_1)$ to $\Cay(G_2;S_2)$. From \cref{NilpotentTorsion}, we know that $\varphi$ induces a well-defined affine bijection $\quot\varphi \colon G_1/N_1 \to G_2/N_2$. By composing with a left-translation, we may assume $\quot\varphi$ is a group isomorphism. Obviously, this implies $G_1/N_1 \iso G_2/N_2$. Also, since $\quot\varphi$ is a well-defined bijection, we must have $\varphi(N_1) = N_2$. Since $\varphi$ is a bijection, this implies $|N_1| = |N_2|$.

($\Leftarrow$) 
Let 
	\begin{itemize}
	\item $\quot\varphi$ be an isomorphism from $G_1/N_1$ to~$G_2/N_2$, 
	\item $\quot{S_1}$ be a finite generating set of $G_1/N_1$, with $e \notin \quot{S_1}$,
	\item $\quot{S_2} = \quot\varphi(\quot{S_1})$ be the corresponding generating set of $G_2/N_2$,
	and
	\item $S_i = \{\, s \in G_i \mid s N_i \in \quot{S_i} \,\}$, for $i = 1,2$.
	\end{itemize}
Let $n = |N_1| = |N_2|$, and let $E_n$ be the edgeless graph on $n$~vertices. 
Then, for $i = 1,2$, it is easy to see that 
	$\Cay(G_i;S_i)$ is isomorphic to the wreath product $\Cay \bigl( G_i/N_i; \quot{S_i} \bigr) [ E_n ]$.
Since it is obvious that $\quot\varphi$ is an isomorphism from $\Cay \bigl( G_1/N_1; \quot{S_1} \bigr)$ to $\Cay \bigl( G_1/N_2; \quot{S_2} \bigr)$, we have $\Cay(G_1;S_1) \iso \Cay(G_2;S_2)$.
\end{proof}

\begin{proof}[Proof of \cref{UniqueSharp}]
Let $H$ be a sharply transitive, nilpotent subgroup of $\Aut \bigl( \Cay(G;S) \bigr)$. Then a well-known result of G.\,Sabidussi tells us that $\Cay(G;S)$ is isomorphic to a Cayley graph on~$H$ \cite[Prop.~1.1]{Li-CISurvey}, \refnote{Sabidussi}
so \cref{NilpotentTorsionHaveIso} implies $G \iso H$.

From \cref{NilpotentIso}, we see that if $S'$ is any symmetric, finite subset of~$G$, such that $\Cay(G;S')\iso \Cay(G;S)$, then there is a group automorphism~$\alpha$ of~$G$ with $\alpha(S) = S'$.\refnote{AffineToAut} Therefore, since $H$ is a sharply transitive subgroup of $\Aut \bigl( \Cay(G;S) \bigr)$ that is isomorphic to~$G$, a well-known theorem of L.\,Babai tells us that $H$ is  conjugate in $\Aut\bigl(\Cay(G;S)\bigr)$ to the left-regular representation of~$G$ \cite[Thm.~4.1]{Li-CISurvey}.\refnote{Babai} 
However, \cref{NilpotentIsNormal} states that the left-regular representation has no other conjugates in $\Aut \bigl( \Cay(G;S) \bigr)$, so we conclude that $H$ is equal to the left-regular representation of~$G$.
\end{proof}

\section{Other groups that have torsion} \label{OtherTorsionSect}

In this \lcnamecref{OtherTorsionSect}, we prove \Cref{TorsionNotNormal}.
In fact, we prove a more specific version of \cref{TorsionNotNormal}:

\begin{prop}\label{torsion-notnormal}
Suppose $F$ is a nontrivial, finite subgroup of a group~$G$, and $S$ is any finite, symmetric generating set for~$G$. Then $\Cay(G; FSF)$ is a connected Cayley graph of finite valency that is not normal.
\end{prop}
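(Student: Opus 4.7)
The plan is to dispatch the easy structural claims first and then construct an explicit graph automorphism that is not an affine bijection. For connectedness, taking both $F$-factors equal to~$e$ shows $S \subseteq FSF$, so $FSF$ generates~$G$; for finite valency, $|FSF| \le |F|^2 |S|$. The key structural observation driving the non-normality is that $FSF \cdot F = FSF$, so for every $g \in G$ the neighborhood $g \cdot FSF$ is a union of left cosets of~$F$. Equivalently, any two elements lying in the same left coset of~$F$ have identical neighborhoods in $\Cay(G;FSF)$. Consequently, any permutation of a single left coset of~$F$ that is an automorphism of the induced subgraph on that coset extends by the identity elsewhere to a graph automorphism of $\Cay(G;FSF)$.

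Concretely, I would fix $f_0 \in F \setminus \{e\}$ (which exists since $F$ is nontrivial) and define
$$
\phi(g) \;=\; \begin{cases} f_0\,g & \text{if } g \in F,\\ g & \text{if } g \notin F. \end{cases}
$$
Since $\phi|_F$ is left-translation by~$f_0$, it is a graph automorphism of the induced Cayley graph on~$F$; since $\phi$ is the identity on $G\setminus F$; and since adjacencies across the partition depend only on the endpoint in $G\setminus F$ (by the twin observation above), $\phi$ is a graph automorphism of $\Cay(G;FSF)$. To show $\phi$ is not affine, suppose for contradiction that $\phi = L_h \circ \alpha$ for some $h \in G$ and $\alpha \in \Aut(G)$. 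Evaluating at~$e$ forces $h = f_0$; then the relations $\phi(f) = f_0 \alpha(f)$ and $\phi(g) = f_0 \alpha(g)$ force $\alpha(f) = f$ for $f \in F$ and $\alpha(g) = f_0^{-1} g$ for $g \notin F$.

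The final step is to derive a contradiction from the multiplicativity of~$\alpha$: choosing $g_1, g_2 \in G \setminus F$ with $g_1 g_2 \notin F$, we obtain
$$
f_0^{-1} g_1 g_2 \;=\; \alpha(g_1 g_2) \;=\; \alpha(g_1)\alpha(g_2) \;=\; f_0^{-1} g_1 f_0^{-1} g_2,
$$
which simplifies to $f_0 = e$, contradicting the choice of~$f_0$. The main obstacle in this plan is producing such a pair $g_1, g_2$, for which it suffices that $F$ have at least three distinct left cosets in~$G$. This is automatic in the setting where \cref{torsion-notnormal} is invoked to prove \cref{TorsionNotNormal}: there $G$ is infinite and $F$ is finite, whence $[G:F] = \infty$. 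The low-index cases $[G:F] \le 2$ would require either permuting a different coset or a slightly different multiplicative argument, but they do not affect the intended application.
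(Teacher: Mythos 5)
Your proof is correct, and it rests on the same key observation as the paper's: since $F\cdot FSF = FSF\cdot F = FSF$, all vertices in a left coset of~$F$ are mutual twins, so any permutation of a single coset extends by the identity to an automorphism of $\Cay(G;FSF)$. Where you diverge is in the choice of coset and the verification of non-affineness. The paper permutes a coset $gF$ chosen (using the finiteness of $FSF$ and the infiniteness of~$G$) to be disjoint from $FSF\cup\{e\}$; the resulting automorphism then fixes~$e$ and fixes every element of the connection set pointwise while not being the identity, so it cannot be affine for an essentially trivial reason. You instead permute the coset~$F$ itself by left-translation by $f_0$, which moves~$e$ and therefore requires the short multiplicative computation with $g_1,g_2\notin F$, $g_1g_2\notin F$ to rule out affineness --- and that computation needs $[G:F]\ge 3$. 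Note that neither argument proves the proposition exactly as stated: the paper's own proof invokes ``$G$ is infinite,'' a hypothesis present in \cref{TorsionNotNormal} but absent from the statement of \cref{torsion-notnormal} (and indeed the statement fails for, say, $G=F$ the Klein four-group, where $\Cay(G;FSF)$ is a complete graph whose left-regular representation is normal in $S_4$). So your explicit caveat about the low-index cases puts you on the same footing as the paper; for the intended application both largeness assumptions hold automatically. The paper's choice of a distant coset buys a cleaner endgame at no extra cost, which is the main reason to prefer it.
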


\begin{proof}
It is straightforward to verify that $FSF$ is a symmetric, finite generating set of~$G$, \refnote{FSFGenSet}
so $\Cay(G; FSF)$ is a connected Cayley graph of finite valency.
Furthermore, for all $g \in G$, it is straightforward to check that all vertices in the coset $gF$ have the same neighbours. \refnote{gFTwin} Therefore, if we choose some $h \in gF$ (with $h \neq g$), then there is an automorphism $\varphi$ of $\Cay(G;FSF)$ that interchanges $g$ and~$h$, but fixes all other vertices of the Cayley graph. Since $G$ is infinite, but $FSF$ is finite, we may assume $g$ has been chosen so that $gF$ is disjoint from $FSF \cup \{e\}$. Then $\varphi$ fixes~$e$, but is obviously not a group automorphism, since it fixes every element of the generating set $FSF$, and is not the identity map (since it moves~$g$ to~$h$). So $\varphi$ is not an affine bijection.
\end{proof}

\AtEndDocument{

\newpage

\markboth{Notes to aid the referee}{Notes to aid the referee}

\begin{appendix}

\numberwithin{equation}{aid}

\section{Notes to aid the referee}

\begin{aid} \label{NonIsoGrps}
See \fullcref{TorsionRems}{NonisoGrps} for an example of isomorphic Cayley graphs on non-isomorphic groups.

\begin{defn}
Let $S$ be a subset of a group~$G$.
	\begin{itemize}
	\item $S$ is \emph{symmetric} if it is closed under inverses; that is, $s^{-1} \in S$ for all $s \in S$.
	\item If $S$ is symmetric, then the corresponding \emph{Cayley graph} on~$G$ is the graph $\Cay(G;S)$ whose vertices are the elements of~$G$, and with an edge $g \edge gs$, for all $g \in g$ and $s \in S$.
	\end{itemize}
\end{defn}

\begin{rem}
It is easy to see that $\Cay(G;S)$ is connected if and only if $S$ generates~$G$.
\end{rem}
\end{aid}

\begin{aid} \label{G1isoG2Aid}
We show that \cref{NilpotentIso} implies \cref{G1isoG2}.
Let $\varphi$ be an isomorphism from $\Cay(G_1;S_1)$ to $\Cay(G_2; S_2)$. From \cref{NilpotentIso}, we know there exist a group isomorphism $\alpha \colon G_1 \to G_2$ and $h \in G_2$, such that $\varphi(x) = h \cdot \alpha(x)$, for all $x \in G_1$. Since $\alpha$ is a group isomorphism, we have $G_1 \iso G_2$.
\end{aid}

\begin{aid} \label{regrep}
The \emph{left-regular representation} of~$G$ is the set $\{\, \hat g \mid g \in G\,\}$ of permutations of~$G$, where $\hat g \colon G \to G$ is defined by $\hat g(x) = gx$ for $x \in G$. Since $\widehat{gh} = \hat g \, \hat h$, this is a subgroup of the symmetric group on the set~$G$.
\end{aid}

%

\begin{aid} \label{KleinBottlePf}
It is clear that $\varphi$ is a bijection. The neighbours of $a^i b^j$ are $a^{i \pm 1} b^j$ and $a^i b^{j\pm 1}$. These neighbours are mapped by~$\varphi$ to 
	$$ b^{i \pm 1} a^j = b^i a^j \cdot b^{\pm1} = \varphi(a^i b^j) b^{\pm1}$$
and 
	$$ b^i a^{j \pm 1} = b^i a^j \cdot a^{\pm1} = \varphi(a^i b^j) a^{\pm1} ,$$
which are neighbours of $\varphi(a^i b^j)$. So $\varphi$ is a graph automorphism.

Suppose $\varphi$ is an affine bijection. Since $\varphi(e) = e$ (and $\varphi$ is a bijection), this implies that $\varphi$ is an automorphism of the group~$G$. However, we have $\varphi(a) = b$, and no automorphism of~$G$ can map $a$ to~$b$, since $\langle a \rangle \normal \, G$, but $\langle b \rangle \not\!\normal \,\,\, G$. This is a contradiction.
\end{aid}

\begin{aid} \label{RyabchenkoPf}

\Cref{AbelianCase} follows from the following weaker conclusion that does not require the assumption that $S_i$ generates~$G_i$.

\begin{lem} \label{AbelianCaseSigma}
Assume
	\begin{itemize}
	\item $G_1$ and~$G_2$ are torsion-free, abelian groups,
	\item $S_i$ is a symmetric, finite subset of~$G_i$, for $i = 1,2$,
	and
	\item $\varphi$ is an isomorphism from $\Cay(G_1;S_1)$ to $\Cay(G_2;S_2)$.
	\end{itemize}
Then, for each $g \in G_1$ and $s \in S_1$, there exists $\sigma_g(s) \in S_2$, such that $\varphi(g s^k) = \varphi(g) \, \sigma_g(s)^k$ for all $k \in \ZZ$. 
\end{lem}

\begin{proof}
To simplify the notation, assume $\langle S_i \rangle = G_i$ for $i = 1,2$. (This causes no loss of generality, since $\varphi \bigl( g \langle S_1 \rangle \bigr) = \varphi(g) \, \langle S_2 \rangle$ for all $g \in G$, but a detailed proof works with cosets of $\langle S_i \rangle$, instead of the subgroup $\langle S_i \rangle$ itself.)
\Cref{ConvexGeodesic} provides $s_1 \in S_1$, such that $[s_1^k]_{k=-\infty}^\infty$ is a convex geodesic line in $\Cay(G_1;S_1)$. 
Then $[g s_1^k]_{k=-\infty}^\infty$ is also a convex geodesic line in $\Cay(G_1;S_1)$ (since left-translation is an automorphism of the Cayley graph). Applying the isomorphism~$\varphi$ yields the convex geodesic line $[\varphi(gs_1^k)]_{k=-\infty}^\infty$ in $\Cay(G_2;S_2)$. Now \cref{EdgeInCentre} implies that all edges in this geodesic line have the same label (since $G_2$ is abelian). This means there is some $\sigma_g(s_1) \in S_2$, such that $\varphi(g s_1^k) = \varphi(g) \, \sigma_g(s_1)^k$ for all $k \in \ZZ$. This is the desired conclusion for $s = s_1$.

Now, we make the important observation that if $\sigma_g(s)$ exists, for some $s \in S_1$, then $\sigma_g(s) = \sigma_h(s)$ for all $g,h \in \langle S_1 \rangle$. Namely, for all $k \in \ZZ$, we have
	\begin{align*}
	\dist_{S_1}(g , h) 
	&= \dist_{S_1}(g s^k, h s^k) 
	&& \text{($G_1$ is abelian)}
	\\&= \dist_{S_2} \bigl( \varphi(g s^k), \varphi(h s^k) \bigr)
	&& \text{($\varphi$ is an isomorphism)}
	\\&= \dist_{S_2} \bigl( \varphi(g) \, \sigma_g(s)^k, \varphi(h) \, \sigma_h(s)^k\bigr)
	,\end{align*}
so \fullcref{NilpLems}{BddDist} tells us that $\sigma_g(s) = \sigma_h(s)$. 

Therefore, $\sigma_g(s_1)$ is a constant (since we assumed at the start of the proof that $\langle S_1 \rangle = G_1$; without this assumption, it would only be constant on cosets of $\langle S_1 \rangle$). Calling this constant~$s_2$ yields $\varphi(gs_1) = \varphi(g) \, s_2$ for all $g \in G_1$. 
Letting $S_i' = S_i \smallsetminus \{s_i^{\pm1}\}$ for $i = 1,2$, this implies that $\varphi$ is an isomorphism from $\Cay(G_1;S_1')$ to $\Cay(G_2;S_2')$. 
By induction on the valency, we conclude that the desired $\sigma_g(s)$ exists for all $s \in S_1 \smallsetminus \{s_1^{\pm1}\}$. Since the first paragraph provides $\sigma_g(s_1)$, this completes the proof.
\end{proof}

\begin{proof}[Proof of \cref{AbelianCase}]
Since $\langle S_1 \rangle = G_1$, the second paragraph of the proof of the \lcnamecref{AbelianCaseSigma} tells us that $\sigma_g(s) = \sigma_h(s)$ for all $g,h \in G_1$, so we may drop the subscript: $\varphi(gs) = \varphi(g) \, \sigma(s)$ for all $g \in G_1$ and $s \in S_1$.
Since $S_1$ generates~$G_1$, and $\varphi$~is a bijection, this implies that $\varphi$ is an affine bijection. 
\end{proof}
\end{aid}


\begin{aid} \label{IsolateNormal}
Let $g \in G$ and $x \in \sqrt{N}$. There  is some $k > 0$ with $x^k \in N$. Since $N \normaleq G$, we have 
	$$ (g^{-1} x g)^k = g^{-1} x^k g \in g^{-1} N g = N ,$$
so $g^{-1} x g \in \sqrt{N}$. Therefore $\sqrt{N} \normaleq G$.

Suppose $g \sqrt{N}$ is a torsion element of $G/\sqrt{N}$. This means there is some $k \neq 0$ with $\left( g \sqrt{N} \right)^k = \sqrt{N}$, so $g^k \in \sqrt{N}$. This means there is some $\ell \neq 0$ with $(g^k)^\ell \in N$. Therefore $g^{k\ell} \in N$ (and $k \ell \neq 0$), so $g \in \sqrt{N}$. Therefore $g \sqrt{N}$ is trivial. So $G/\sqrt{N}$ is torsion-free.
\end{aid}

\begin{aid} \label{rankH<rankG}

\begin{lem}[{}{\cite[Lem.~2.6, p.~9]{Hall-NilpGrps}}] \label{subnormal}
Let $G$ be nilpotent of class~$c$ and let $H$ be a proper subgroup of~$G$. Define $H_0 = H$ and, inductively, $H_{i+1}$ to be the normalizer of~$H_i$ in~$G$. Then
	$$ H = H_0 < H_1 < \cdots < H_r = G $$
for some $r \le c$.
\end{lem}

\begin{proof}[Proof of \fullcref{NilpLems}{rank})]
Let $H_i$ be as in \cref{subnormal}. Since $H_{i+1}$ is the normalizer of~$H_i$, we may write
	$$ H = H_0 \normal H_1 \normal \cdots \normal H_r = G .$$
From \fullcref{NilpLems}{RankQuotient} and induction, we have
	$$ \rank G = \rank H + \sum_{i = 1}^r \rank(H_i/H_{i-1}). $$
So $\rank H \le \rank G$, with equality if and only if $\rank(H_i/H_{i-1}) = 0$ for all~$i$. 

Since it is clear that $\rank F = 0$ if and only if $F$ is finite, this means that $\rank H = \rank G$ if and only if $H_i/H_{i-1}$ is finite for all~$i$. This is the case if and only if $G/H$ is finite.
\end{proof}
\end{aid}

\begin{aid} \label{FinitelyManyConjugates}
If we take the special case of $\pi$-isolated where $\pi$ is the set of all prime numbers, \cite[2.3.8(i), p.~42]{LennoxRobinson} says:
	\begin{itemize}
	\item[] Suppose $H$ is a subgroup of a torsion-free, nilpotent group~$G$. 
	\item[] Then $C_G(H)$ is isolated for every subgroup~$H$.
	\end{itemize}
To say that $C_G(H)$ is ``isolated"  means that if $g^k \in C_G(H)$ for some nonzero $k \in \ZZ$, then $g \in C_G(H)$ \cite[first paragraph of \S2.3, p.~38]{LennoxRobinson}.

Now, suppose $h$~has only finitely many conjugates. This means $C_G(h)$ is a finite-index subgroup of~$G$, so there is some nonzero $k \in \ZZ$, such that $g^k \in C_G(h)$ for all $g \in G$. From the preceding paragraph, we conclude that $g \in C_G(h)$. Since $g$ is an arbitrary element of~$G$, this means $h \in Z(G)$.
\end{aid}

\begin{aid} \label{BddDist}
Since $\dist_{S}(a^k, gb^k)$ is bounded as a function of~$k$, we know that 
	$$ \text{$\{\, a^{-k} g b^k \mid k \in \ZZ \,\}$ is finite} .$$
Hence, there exist $k \neq \ell$, such that $a^{-k} g b^k = a^{-\ell} g \, b^\ell$, so, letting $m = \ell - k \neq 0$, we have 
$g^{-1} a^m g = b^m$. In other words, $(g^{-1} a g)^m = b^m$. Since $G$ is torsion-free nilpotent, this implies $g^{-1} a g = b$ \cite[2.1.2, p.~30]{LennoxRobinson}.
\end{aid}

\begin{aid} \label{DistortedNote}
The paper \cite{Conner-TransNumbs} uses the following notation:
	\begin{itemize}
	\item \cite[Defns.~2.2 and 2.3]{Conner-TransNumbs} $\|x\| = \dist_S(e,x)$ (this is called a ``word metric'')
	\item \cite[Lem.~2.43(i)]{Conner-TransNumbs} $\tau(x) = \lim_{n \to \infty} \|x^n\|/n$
	\item \cite[Defn.~2.5]{Conner-TransNumbs} $I(G) = \{\, g \in G \mid \tau(g) = 0 \,\}$
	\item \cite[Defn.~3.1]{Conner-TransNumbs} $B(G) = \{\, g \in G \mid \tau(gx) = \tau(x), \ \forall x \in G \,\}$.
	\item \cite[Notn.~3.2(ii)]{Conner-TransNumbs} $G' = [G,G]$
	\end{itemize}

\begin{lem}[{}{\cite[Lem.~3.5(i,iii)]{Conner-TransNumbs}}] \label{TransNumLem}
Let $G$ be a nilpotent group. Then
\begin{enumerate}
\item[(i)] $B(G) = I(G)$
\item[(iii)] If $G$ is finitely generated and equipped with a word metric then $B(G) = \sqrt{G'}$.
\end{enumerate}
\end{lem}

\begin{proof}[Proof of \fullcref{NilpLems}{distorted}]
Translating to the notation of \cite{Conner-TransNumbs}, we have 
	$$ \dist_S(e,g^k)/k \rightarrow 0 
	\Leftrightarrow  \|g\\^k\|/k \rightarrow 0 
	\Leftrightarrow \tau(g) = 0 
	\Leftrightarrow g \in I(G)
	.$$
From \cref{TransNumLem}, we have $I(G) = B(G) = \sqrt{G'} = \sqrt{[G,G]}$.
\end{proof}
\end{aid}

\begin{aid} \label{InverseOrder}
We have
	$$ x \prec' y 
	\Rightarrow x^{-1} \prec y^{-1} 
	\Rightarrow b^{-1} x^{-1} a^{-1} \prec b^{-1} y^{-1} a^{-1}
	\Rightarrow axb \prec' ayb ,$$
so $\prec'$ is invariant under both left-translations and right-translations.

Also, from the definition of~$\prec'$, we have $s \succeq (S \cup S^{-1}) \Leftrightarrow s^{-1} \succeq' (S \cup S^{-1})$.
\end{aid}

\begin{aid} \label{ac<bd}
Since $a \preceq b$, invariance under right-translations implies $ac \preceq bc$ (with equality iff $a = b$). Since $c \preceq d$, invariance under left-translations implies $bc \preceq bd$ (with equality iff $c = d$). Now transitivity implies $ac \preceq bd$ (with equality iff $a = b$ and $c = d$).

For the base case of a proof by induction, note that the maximality of~$s$ implies $s_1 \preceq s$ (with equality iff $s_1 = s$). Now suppose $s_1 s_2 \cdots s_k \preceq s^k$ (with equality iff $s_1 = s_2 = \cdots = s_k = s$). Since $s_1 s_2 \cdots s_k \preceq s^k$ and $s_{k+1} \preceq s$, we have 
	$$s_1 s_2 \cdots s_{k+1} = s_1 s_2 \cdots s_k \cdot s_{k+1} \preceq s^k \cdot s = s^{k+1} ,$$
with equality iff $s_1 s_2 \cdots s_k = s^k$ and $s_{k+1} = s$. However, we have already noted that $s_1 s_2 \cdots s_k = s^k$ implies $s_1 = s_2 = \cdots = s_k = s$.
\end{aid}

\begin{aid} \label{TranslateToIdentity}
Let $h = \varphi(e)$, and define $\varphi'(x) = h^{-1} \cdot \varphi(x)$. Then $\varphi'$ is an isomorphism from $\Cay(G_1;S_1)$ to $\Cay(G_2;S_2)$ with $\varphi'(e) = e$. If $\varphi'$ is an affine bijection, then $\varphi$ is also an affine bijection.
\end{aid}

\begin{aid} \label{g=e}
Let $h = \varphi(g)$, and define $\varphi'(x) = h^{-1} \cdot \varphi(g x)$. Then $\varphi'$ is an isomorphism from $\Cay(G_1;S_1)$ to $\Cay(G_2;S_2)$ with $\varphi'(e) = e$. If there is some $g' \in G_2$, such that $\varphi'(z^k) = \varphi'(e) \, (g')^k$, for all $k \in \ZZ$, then
	$$ \varphi(gz^k) = h \cdot \varphi'(z^k) = \varphi(g) \cdot \varphi'(e) \, (g')^k = \varphi(g) \, (g')^k ,$$
so we may let $\sigma_g(z) = g'$.
\end{aid}

\begin{aid} \label{psi(z)=s}
The definition of~$S_2^*$ provides $\rho \in \Aut_e \bigl( \Cay(G_2;S_2) \bigr)$ with $\rho(g_2) = s$. Since $g_2 = \varphi(z)$, we may let $\psi$ be the composition $\rho \circ \varphi$.
\end{aid}

\begin{aid} \label{CommutatorSmall}
Suppose $G_1^*$ has finite index in~$G_1$. Then $\sqrt{G_1^*} = G_1$, so  \fullcref{NilpLems}{CommutatorIsolated} implies
	$$ [G_1,G_1] = [\sqrt{G_1^*}, \sqrt{G_1^*}] \subseteq \sqrt{[G_1^*,G_1^*]} , $$
so $[G_1^*,G_1^*]$ has finite index in $[G_1,G_1]$ \fullcsee{NilpLems}{FiniteIndex}. This is a contradiction.
\end{aid}

\begin{aid} \label{Zconnected}
For a graph~$\Gamma$ and $r \in \ZZ^+$, the $r$th power of~$\Gamma$ is the graph $\Gamma^r$ with the same vertex set as~$\Gamma$, and with an edge from $u$ to~$v$ iff $\dist_\Gamma(u,v) \le r$. It is clear that:
	\begin{itemize}
	\item Any isomorphism from $\Gamma_1$ to~$\Gamma_2$ is also an isomorphism from~$\Gamma_1^r$ to~$\Gamma_2^r$.
	\item $\Cay(G;S)^r = \Cay(G; S^r)$, where $S^r$ is the set of all elements of~$G$ that can be written as a product of $\le r$ elements of~$S$.
	\end{itemize}
Since $Z_1^\dagger$ is finitely generated, it has a finite generating set. For any sufficiently large~$r$, this finite set is contained in $S_1^r$. Since $\varphi$ is an isomorphism from $\Cay(G_1;S_1^r)$ to $\Cay(G_2;S_2^r)$, there is no harm in replacing $S_1$ and~$S_2$ with $S_1^r$ and~$S_2^r$.
\end{aid}

\begin{aid} \label{G/ZdaggerTorsFree}
We have
	\begin{align*} 
	\sqrt{Z_i^\dagger}
	&= \sqrt{Z(G_i) \cap \sqrt{[G_i,G_i]}}
	&& (\text{definition of $Z_i^\dagger$})
	\\&= \sqrt{Z(G_i)} \cap \sqrt{\sqrt{[G_i,G_i]}}
	&& (\sqrt{H \cap K} = \sqrt{H} \cap \sqrt{K})
	\\&= Z(G_i) \cap \sqrt{[G_i,G_i]}
	&& \text{(\fullcref{NilpLems}{Zisolated} and $\sqrt{\sqrt{H}} = \sqrt{H}$)}
	\\&= Z_i^\dagger 
	&& (\text{definition of $Z_i^\dagger$})
	, \end{align*}
so $G/Z_i^\dagger$ is torsion-free.
\end{aid}

\begin{aid} \label{Nstar}
Suppose we can show that the result is true for $\langle N^*\rangle$.
Let $f$ be an automorphism of $\Cay(G;S)$ that fixes $e$, and let $f^*$ be the restriction of $f$ to $\langle N^*\rangle$.  Since $N^*$ is invariant, we know that $f^*$ is an automorphism of $\Cay(\langle N^*\rangle; N^*)$.  Also, it is clear from the definition of $N^*$ that $N$ is contained in $N^*$.  (Also, $\langle N^*\rangle/N$ is bi-orderable, because it is a subgroup of $G/N$.)  Therefore, if we know the theorem is true for $\langle N^*\rangle$, then $f^*(N)$ is contained in $N$. Since $f^*(N) = f(N)$, this means that $f(N)$ is contained in $N$, as desired.
\end{aid}

\begin{aid} \label{Sabidussi}
\begin{prop}[Sabidussi, 1964]
A graph~$\Gamma$ is isomorphic to a Cayley graph on a group~$G$ if and only if $\Aut \Gamma$ contains a sharply transitive subgroup that is isomorphic to~$G$.
\end{prop}

\medskip

Now, let $N$ be the torsion subgroup of~$H$.
Since $G$ and~$H$ both have a Cayley graph isomorphic to $\Cay(G;S)$ (and the torsion subgroup of~$G$ is trivial), \cref{NilpotentTorsionHaveIso} tells us that $G/\{e\} \iso H/N$ and $|\{e\}| = |N|$. So $G \iso H$.
\end{aid}

\begin{aid} \label{AffineToAut}
Let $\varphi$ be an isomorphism from $\Cay(G;S)$ to $\Cay(G;S')$. From \cref{NilpotentIso}, we know that $\varphi$ is an affine bijection, so there exist a group automorphism $\alpha$ of~$G$ and $h \in G$, such that $\varphi(x) = h \cdot \alpha(x)$ for all $x \in G$. Since $\varphi$ is a graph isomorphism, we have $\varphi(xS) = \varphi(x) S'$ for all $x \in S$. Taking $x = e$, this yields
	$$ h \cdot \alpha(S) = \varphi(eS) = \varphi(e) S' = h \cdot \alpha(e) S' = h \cdot S' ,$$
so $\alpha(S) = S'$.
\end{aid}

\begin{aid} \label{Babai}
The following result is traditionally stated only for finite groups, but the same proof works in general.

\begin{prop}[Babai, 1977]
For a group $G$, the following two conditions are equivalent:
\begin{itemize}
\item whenever $S$ and $S'$ are finite, symmetric generating sets of $G$ and $\Cay(G;S) \cong \Cay(G;S')$, there is an automorphism $\alpha$ of $G$ with $\alpha(S)=S'$; 
\item for every finite, symmetric generating set~$S$ of~$G$, the left-regular representation of~$G$ is conjugate to every subgroup of $\Aut \bigl( \Cay(G;S) \bigr)$ that is isomorphic to~$G$ and acts sharply transitively on the vertices of $\Cay(G;S)$. 
\end{itemize}
\end{prop}
\end{aid}

\begin{aid} \label{FSFGenSet}
We have $(FSF)^{-1} = F^{-1} S^{-1} F^{-1} = FSF$ (since $F$ and~$S$ are symmetric), so $FSF$ is symmetric.
Also, it is clear that $F S F$ is finite, since $F$ and~$S$ are both finite. Finally, since $e \in F$ (because $F$ is a subgroup), we have $S = e \cdot S \cdot e \subseteq F S F$, so $F S F$ generates~$G$. 
\end{aid} 

\begin{aid} \label{gFTwin}
For $f \in F$, the set of neighbours of $gf$ is $gf \cdot FSF = g \cdot (fF) \cdot SF = gFSF$, which is the set of neighbours of~$g$.
\end{aid}

\end{appendix}

}

\end{document}